\newtheorem{theorem}{Theorem}[section]
\newtheorem{proposition}[theorem]{Proposition}
\newtheorem{corollary}[theorem]{Corollary}
\theoremstyle{definition}
\newtheorem{definition}[theorem]{Definition}
\newtheorem{property}[theorem]{Property}
\theoremstyle{remark}
\newtheorem{remark}[theorem]{Remark}
\numberwithin{equation}{section}
\begin{document}

\setcounter{page}{1}

\title[Extremum principle for the Hadamard derivatives...]{Extremum principle for the Hadamard derivatives and its application to nonlinear fractional partial differential equations}

\author[M. Kirane \MakeLowercase{and} B. T. Torebek]{Mokhtar Kirane \MakeLowercase{and} Berikbol T. Torebek$^{*}$}

\address{\textcolor[rgb]{0.00,0.00,0.84}{Mokhtar Kirane\newline LaSIE, Facult\'{e} des Sciences, \newline Pole Sciences et Technologies, Universit\'{e} de La Rochelle \newline Avenue M. Crepeau, 17042 La Rochelle Cedex, France \newline NAAM Research Group, Department of Mathematics, \newline Faculty of Science, King Abdulaziz University, \newline P.O. Box 80203, Jeddah 21589, Saudi Arabia}}
\email{\textcolor[rgb]{0.00,0.00,0.84}{mkirane@univ-lr.fr}}

\address{\textcolor[rgb]{0.00,0.00,0.84}{Berikbol T. Torebek \newline Institute of
Mathematics and Mathematical Modeling \newline 125 Pushkin str.,
050010 Almaty, Kazakhstan \newline Al--Farabi Kazakh National University \newline Al--Farabi ave. 71, 050040, Almaty, Kazakhstan}}
\email{\textcolor[rgb]{0.00,0.00,0.84}{torebek@math.kz}}


\let\thefootnote\relax\footnote{$^{*}$Corresponding author}

\subjclass[2010]{Primary 35B50; Secondary 26A33, 35K55, 35J60.}

\keywords{time-fractional diffusion equation, maximum principle, Hadamard derivative,
fractional elliptic equation, nonlinear problem.}

\begin{abstract}
In this paper we obtain new estimates of the Hadamard fractional derivatives of a function at its extreme points. The extremum principle is then applied to show that the initial-boundary-value problem for linear and nonlinear time-fractional diffusion equations possesses at most one classical solution and this solution depends continuously  on the initial and boundary conditions. The extremum principle for an elliptic equation with a fractional Hadamard derivative is also proved.
\end{abstract} \maketitle

\section{Introduction}
One of the most useful and best known tools employed in the study of ordinary and partial differential equations is the extremum principle. It enables to obtain information about solutions without knowing their explicit forms.

Recently, with the development of fractional differential equations, the extremum principles for fractional differential equations have started to draw attention. This motivates us to consider the extremum principle for the Hadamard derivatives.

In \cite{Nieto}, Nieto presented two new maximum principles for a linear Riemann-Liouville fractional ordinary differential equation with initial or periodic boundary conditions. Nieto has proved the following results:

\begin{itemize}
  \item Let $\alpha\in(0, 1), \lambda \in \mathbb{R},$ and $t^{1-\alpha}u \in C([0, T])$ be such that
$$_{RL}D^{\alpha}u(t) - \lambda u(t) \geq 0,$$ $$\lim\limits_{t\rightarrow +0}t^{1-\alpha} u(t) \geq 0.$$
Then $u(t) \geq 0$ for $t \in (0, T].$
  \item Let $\alpha\in(0, 1), \lambda \in \mathbb{R},$ and $E_{\alpha,\alpha}(\lambda) < \frac{1}{\Gamma(\alpha)}.$ Suppose $t^{1-\alpha}u \in C([0, 1])$ is such that $$_{RL}D^{\alpha}u(t) - \lambda u(t) \geq 0,$$ $$\lim\limits_{t\rightarrow +0}t^{1-\alpha} u(t) = u(1),$$ where $E_{\alpha,\alpha}(\lambda)$ is a Mittag-Leffler function.
Then $u(t) \geq 0$ for $t \in (0, 1].$
\end{itemize}

Luchko \cite{Luchko1} proved an extremum principle for the Caputo fractional derivative in the following form:

\begin{itemize}
  \item Let  $f \in C^1((0, T )) \cap C([0, T ])$ attain its maximum over the interval $[0, T ]$ at the point $t_0 \in (0, T ].$
Then the Caputo fractional derivative of the function $f$ is non-negative at the point $t_0$ for any $0 < \alpha < 1:$
$_C D^\alpha f(t_0)\geq 0.$
  \item Let  $f \in C^1((0, T )) \cap C([0, T ])$ attain its minimum over the interval $[0, T ]$ at the point $t_0 \in (0, T ].$
Then the Caputo fractional derivative of the function $f$ is non-positive at the point $t_0$ for any $0 < \alpha < 1:$
$_C D^\alpha f(t_0)\leq 0.$
\end{itemize}

Based on an extremum principle for the Caputo fractional derivative he proved a maximum principle for the generalized time-fractional diffusion equation over an open bounded domain. The maximum principle was then applied to show some uniqueness and existence results for the initial-boundary-value problem for the generalized time-fractional diffusion equation \cite{Luchko2}. He also investigated the initial-boundary-value problems for the generalized multi-term time-fractional diffusion equation \cite{Luchko3} and the diffusion equation of distributed order \cite{Luchko4}, and obtained some existence results for the generalized solutions in \cite{Luchko3, Luchko4}. For the one dimensional time-fractional diffusion equation, the generalized solution to the initial-boundary value problem was shown to be a solution in the classical sense \cite{Luchko5}.

In \cite{Al-Refai} Al-Refai generalized the results of Luchko as follows:

\begin{itemize}
  \item Let $f \in C^1([0, 1])$ attain its maximum at $t_0 \in (0, 1),$ then $$_C D^\alpha f(t_0) \geq \frac{t^{-\alpha}}{\Gamma(1-\alpha)}(f(t_0) - f(0)) \geq 0,$$ for all $0 < \alpha < 1.$
  \item Let $f \in C^1([0, 1])$ attain its maximum at $t_0 \in (0, 1),$ then $$_{RL} D^\alpha f(t_0) \geq \frac{t^{-\alpha}}{\Gamma(1-\alpha)}f(t_0),$$ for all $0 < \alpha < 1.$
Moreover, if $f(t_0) \geq 0,$ then $_{RL} D^\alpha f(t_0) \geq 0.$
\end{itemize}

These inequalities were used to prove the extremum principle for linear and nonlinear time-fractional diffusion equations in works of Al-Refai and Luchko \cite{Al-Refai-Luchko1, Al-Refai-Luchko2}, and in other papers \cite{Chan, JiaLi, LiuZeng, LuchkoYa, YeLiu}.

Maximum and minimum principles for time-fractional diffusion equations with Caputo-Katugampola derivative $$_{CK}D^{\alpha,\rho}f(t)=\frac{\rho^{\alpha}}{\Gamma(1-\alpha)}\int\limits_a^t \left(t^\rho-s^\rho\right)^{-\alpha} f'(s)ds,\, \rho>0,\, 0<\alpha<1$$ are proposed in \cite{CaoKong} by Cao, Kong and Zeng. In \cite{Al-Refai-Ab, Al-Refai2, KiraneBT, Borikhanov, Ahmad} it is proved a maximum principle for the generalized time-fractional diffusion equations, based on an extremum principle for the fractional derivatives $$_{CF}D^{\alpha}f(t)=\frac{1}{1-\alpha}\int\limits_a^t \exp\left(-\frac{\alpha}{1-\alpha}(t-s)\right)f'(s)ds,\, \alpha\in (0,1)$$ and $$_{AB}D^{\alpha}f(t)=\frac{1}{1-\alpha}\int\limits_a^t E_{\alpha,1}\left(-\frac{\alpha}{1-\alpha}(t-s)^{\alpha}\right)f'(s)ds,\, \alpha\in (0,1)$$ with non-singular kernel.

An investigation of the maximum principle for fractional elliptic equations is devoted to \cite{Cabre, Capella, Cheng, Del}.

Note also that in \cite{Alikhanov, Alsaedi, Mu} using the maximum principle were obtained an upper bound of the sup norm in terms of the integral of the solution.

The present paper is devoted to the study of the extremum principle of the Hadamard derivative; some of  its  applications are presented.

The aim  of this article is:
\begin{itemize}
  \item to prove the estimates for the Hadamard and Hadamard-type fractional derivatives at the extremum points;
  \item to prove the maximum and minimum principles for the time-fractional diffusion equations with Hadamard derivative;
  \item to prove the maximum and minimum principles for the time-fractional diffusion equations with Hadamard-type derivative;
  \item to prove the uniqueness of solution and continuous dependence of a solution on the initial conditions of the initial-boundary problems for the nonlinear time-fractional diffusion equations;
  \item to prove the maximum and minimum principles for the elliptic equation with Hadamard derivative;
  \item to prove the uniqueness of solution of the boundary-value problems for the elliptic equation with fractional derivative.
\end{itemize}

\section{Some definitions and properties of fractional operators}\label{S2}

In this section, we recall some basic definitions and properties of the Hadamard fractional operators.

\begin{definition}\label{d1} \cite{Kilbas} Let $f\in L_{loc}^1([a,b]),$ where $-\infty\leq a<t<b\leq+\infty$. The Hadamard fractional integral $I^\alpha$ of order $\alpha\in\mathbb R$ ($\alpha>0$) is defined as
$$I^\alpha  f\left( t \right) = {\rm{
}}\frac{1}{{\Gamma \left( \alpha \right)}}\int\limits_a^t {\left(
\log\frac{t}{s} \right)^{\alpha  - 1} f\left( s \right)} \frac{ds}{s},$$
where $\Gamma$ denotes the Euler gamma function.
\end{definition}

\begin{definition}\cite{Kilbas} Let $f\in L^1([a,b]),$ $-\infty\leq a<t<b\leq+\infty$ and $I^{1-\alpha}f\in W^1_{2}([a,b]), 0<\alpha<1$ where $W^{1}_2([a,b])$ is the Sobolev space. The Hadamard fractional derivative $D^\alpha$ of order $\alpha$ is defined as
$$D^\alpha f \left( t \right) = t\frac{{d}}{{dt}}I^{1 - \alpha } f \left( t \right)={\rm{}}\frac{1}{{\Gamma \left( 1-\alpha \right)}}t\frac{d}{dt}\int\limits_a^t {\left(\log\frac{t}{s} \right)^{-\alpha} f\left( s \right)} \frac{ds}{s}.$$
\end{definition}

\begin{definition}\label{d2}\cite{Kilbas} Let $0<\alpha <1$ and $f\in {{W}^{1}_2}\left([ a,b ] \right).$ The Hadamard type fractional derivative of order $\alpha $ is defined by

$${D}_{*}^{\alpha } f\left( t \right)=I^{1 - \alpha } \left( t\frac{{d}}{{dt}}f \left( t \right)\right)={\rm{}}\frac{1}{{\Gamma \left( 1-\alpha \right)}}\int\limits_a^t {\left(\log\frac{t}{s} \right)^{-\alpha} f'\left( s \right)} ds.$$
\end{definition}

\begin{property}\label{p4}\cite{Kilbas}  If $f\in {{W}^{1}_2}\left([ a,b ] \right),$ then the Hadamard fractional derivative of order $\alpha $ can be represented in the form
$$D^{\alpha } f\left( t \right)=\frac{f(a)}{\Gamma(1-\alpha)}\left(\log\frac{t}{a}\right)^{-\alpha}+D_*^{\alpha } f\left( t \right).$$
\end{property}
\begin{property}\label{p5} \cite{Kilbas} If $f\in L^1([a,b])$ and $I^{1-\alpha}f\in W^1_{2}([a,b]),$ then
$${I}^{\alpha }{D}^{\alpha }f\left( t \right)=f(t)-I^{1-\alpha}f(a)\frac{\left(\log\frac{t}{a}\right)^{\alpha-1}}{\Gamma(\alpha)}.$$
\end{property}

\begin{property}\label{p6} \cite{Kilbas} If $\beta>\alpha>0,\,$ and $-\infty<a<b<+\infty,$ then
$${D}^{\alpha }\left( \log\frac{t}{a} \right)^{\beta-1}=\frac{\Gamma(\beta)}{\Gamma(\beta-\alpha)}\left( \log\frac{t}{a} \right)^{\beta-\alpha-1}.$$
\end{property}

\begin{property}\label{p7} \cite{Kilbas} If $0<\alpha<1$ and $-\infty<a<b<+\infty,$ then
${D}^{\alpha }\left( \log\frac{t}{a} \right)^{\alpha-1}=0.$
\end{property}

\section{Main Results}
\begin{proposition}\label{l7} Let a function $f\left( t \right)\in {{C}^{1}}\left([1, T ]\right)$.\\
\begin{description}
  \item[(i)] If $f\left( t \right)$ attains its maximum value over $\left[1, T \right]$ at a point ${{t}_{0}}\in \left[1, T \right]$, then for $0<\alpha <1,$ we get
\begin{equation}\label{2.1}D_{*}^{\alpha }f \left( {{t}_{0}} \right)\ge \frac{1}{\Gamma(1-\alpha) }\left(\log t_0\right)^{-\alpha}\left( f\left( {{t}_{0}} \right)-f\left(1\right) \right)\ge 0.
\end{equation}
  \item[(ii)] If $f\left( t \right)$ attains its minimum value over $\left[1, T\right]$ at a point ${{t}_{0}}\in \left[1, T\right]$, then for $0<\alpha <1,$ we have
\begin{equation}\label{2.1}D_{*}^{\alpha }f\left( {{t}_{0}} \right)\le \frac{1}{\Gamma(1-\alpha)}\left(\log t_0\right)^{-\alpha}\left( f\left( {{t}_{0}} \right)-f\left(1\right) \right)\le 0.
\end{equation}
\end{description}
\end{proposition}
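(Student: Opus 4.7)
The plan is to prove $(i)$ by an integration-by-parts trick applied to the auxiliary function $g(t)=f(t_0)-f(t)$, and then deduce $(ii)$ by applying $(i)$ to $-f$.

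First I would observe that $g\in C^1([1,T])$ with $g(s)\ge 0$ on $[1,T]$, $g(t_0)=0$, and $g'(s)=-f'(s)$. Since the Hadamard-type derivative kills the constant $f(t_0)$ (note that $I^{1-\alpha}(t\cdot 0)=0$), we have $D_*^\alpha g(t_0)=-D_*^\alpha f(t_0)$. Hence establishing the upper bound
$$D_*^\alpha g(t_0)\le -\frac{(\log t_0)^{-\alpha}}{\Gamma(1-\alpha)}\bigl(f(t_0)-f(1)\bigr)$$
is equivalent to proving \eqref{2.1}.

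To obtain that upper bound I would integrate by parts in
$$\Gamma(1-\alpha)\,D_*^\alpha g(t_0)=\int_1^{t_0}\bigl(\log(t_0/s)\bigr)^{-\alpha}g'(s)\,ds,$$
choosing $u=(\log(t_0/s))^{-\alpha}$ and $dv=g'(s)\,ds$. A direct computation gives
$$\frac{d}{ds}\bigl(\log(t_0/s)\bigr)^{-\alpha}=\frac{\alpha}{s}\bigl(\log(t_0/s)\bigr)^{-\alpha-1},$$
so
$$\int_1^{t_0}\bigl(\log(t_0/s)\bigr)^{-\alpha}g'(s)\,ds=\Bigl[\bigl(\log(t_0/s)\bigr)^{-\alpha}g(s)\Bigr]_{s=1}^{s=t_0}-\alpha\int_1^{t_0}\bigl(\log(t_0/s)\bigr)^{-\alpha-1}\frac{g(s)}{s}\,ds.$$
At $s=1$ the bracketed term equals $(\log t_0)^{-\alpha}(f(t_0)-f(1))$. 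The boundary contribution at $s=t_0$ vanishes because $g\in C^1$ and $g(t_0)=0$ force $g(s)=O(t_0-s)$, while $(\log(t_0/s))^{-\alpha}\sim((t_0-s)/t_0)^{-\alpha}$, so the product is $O((t_0-s)^{1-\alpha})\to 0$ as $s\to t_0^-$ (since $\alpha<1$); the same estimate shows the remaining integral converges absolutely.

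Now since $g\ge 0$ on $[1,t_0]$ and the kernel $\alpha s^{-1}(\log(t_0/s))^{-\alpha-1}$ is non-negative, the integral on the right is non-negative, whence
$$\int_1^{t_0}\bigl(\log(t_0/s)\bigr)^{-\alpha}g'(s)\,ds\le -(\log t_0)^{-\alpha}\bigl(f(t_0)-f(1)\bigr),$$
which after dividing by $\Gamma(1-\alpha)$ and using $D_*^\alpha f(t_0)=-D_*^\alpha g(t_0)$ yields \eqref{2.1}; the factor $f(t_0)-f(1)\ge 0$ because $t_0$ is a maximizer. Part $(ii)$ follows immediately by applying $(i)$ to $-f$, whose maximum at $t_0$ is $-f(t_0)$. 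The only delicate point is justifying the vanishing boundary term and integrability at $s=t_0$, and the brief $C^1$-Taylor expansion above handles this.
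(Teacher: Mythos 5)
Your proposal is correct and follows essentially the same route as the paper: the same auxiliary function $g(t)=f(t_0)-f(t)$, the same integration by parts producing the boundary term $-(\log t_0)^{-\alpha}g(1)$ plus a nonpositive remainder, and the same reduction of (ii) to (i) via $-f$. Your additional justification that the boundary contribution at $s=t_0$ vanishes and that the resulting integral converges (using $g(s)=O(t_0-s)$ against the $(t_0-s)^{-\alpha-1}$ singularity) is a welcome extra detail that the paper leaves implicit.
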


\begin{proof}
For the proof of part (i), we define the auxiliary function $$g\left( t \right)=f\left( {{t}_{0}} \right)-f\left( t \right),\text{ }t\in \left[1,T \right].$$ It follows then that $g\left( t \right)\ge 0,$ on $\left[1, T \right],$ $g\left( {{t}_{0}} \right)={g}'\left( {{t}_{0}} \right)=0$ and $$D_{*}^{\alpha }g \left( t \right)=-D_{*}^{\alpha }f\left( t \right).$$
 Integrating by parts, we have
\begin{align*}D_{*}^{\alpha} g \left( {{t}_{0}} \right)&
=-\frac{1}{\Gamma(1-\alpha)}\left(\log t_0\right)^{-\alpha} g\left( 1 \right)\\&
-\frac{\alpha }{{{\Gamma\left( 1-\alpha  \right)}}}\int\limits_{1}^{{{t}_{0}}}g\left( s  \right){{\left( \log \frac{t_0}{s}  \right)}^{-\alpha -1}}\frac{ds}{s} .\end{align*}
Since $g\left( t \right)$ and $\log\frac{t_0}{s}$ are nonnegative on $\left[1, T \right]$, the integral in the last equation is nonnegative, and thus
\begin{align*}D_{*}^{\alpha} g \left( {{t}_{0}} \right)&
\leq-\frac{1}{\Gamma(1-\alpha)}\left(\log t_0\right)^{-\alpha} g\left( 1 \right)\\&=-\frac{1}{\Gamma(1-\alpha)}\left(\log t_0\right)^{-\alpha} \left(f(t_0)-f(1) \right).\end{align*}

The last inequality yields \begin{align*}-D_{*}^{\alpha} f \left( {{t}_{0}} \right)&
\leq-\frac{1}{\Gamma(1-\alpha)}\left(\log t_0\right)^{-\alpha} \left(f(t_0)-f(1) \right),\end{align*} which proves the result.
	By applying a similar argument to $-f\left( t \right)$, we obtain part (ii).
\end{proof}

\begin{proposition}\label{l8}Let  $I^{1-\alpha}f\left( t \right)\in {{C}^{1}}\left([1, T ]\right).$\\
\begin{description}
  \item[(i)] If $f\left( t \right)$ attains its maximum value over $\left[1, T \right]$ at a point ${{t}_{0}}\in \left[1, T \right]$, then for $0<\alpha <1,$ we get
\begin{equation}\label{2.1}D^{\alpha }f \left( {{t}_{0}} \right)\ge \frac{1}{\Gamma(1-\alpha)}\left(\log t_0\right)^{-\alpha} f\left( {{t}_{0}} \right).
\end{equation} Moreover, if $f(t_0) \geq 0,$ then $D^\alpha f(t_0) \geq 0.$
  \item[(ii)] If $f\left( t \right)$ attains its minimum value over $\left[1, T\right]$ at a point ${{t}_{0}}\in \left[1, T\right]$, then for $0<\alpha <1,$ we have
\begin{equation}\label{2.1}D^{\alpha }f\left( {{t}_{0}} \right)\le \frac{1}{\Gamma(1-\alpha)}\left(\log t_0\right)^{-\alpha}f\left( {{t}_{0}} \right).
\end{equation} Moreover, if $f(t_0) \leq 0,$ then $D^\alpha f(t_0) \leq 0.$
\end{description}
\end{proposition}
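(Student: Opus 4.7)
The plan is to reduce Proposition \ref{l8} to the bound for $D_*^\alpha$ already established in Proposition \ref{l7}, using the decomposition of the Hadamard derivative in Property \ref{p4} with $a=1$. The key algebraic identity is
\[
D^\alpha f(t_0) \;=\; \frac{f(1)}{\Gamma(1-\alpha)}(\log t_0)^{-\alpha} + D_*^\alpha f(t_0),
\]
which converts a lower bound on $D_*^\alpha f(t_0)$ into one on $D^\alpha f(t_0)$ by simply restoring the boundary contribution at $t=1$.

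For part (i), first I would invoke Property \ref{p4} at the maximum point $t_0$ to get the decomposition above. Then, since $f$ attains its maximum over $[1,T]$ at $t_0$, I would apply Proposition \ref{l7}(i) to obtain
\[
D_*^\alpha f(t_0) \;\geq\; \frac{1}{\Gamma(1-\alpha)}(\log t_0)^{-\alpha}\bigl(f(t_0)-f(1)\bigr).
\]
Adding $\frac{f(1)}{\Gamma(1-\alpha)}(\log t_0)^{-\alpha}$ to both sides collapses the $f(1)$ terms and yields
\[
D^\alpha f(t_0) \;\geq\; \frac{1}{\Gamma(1-\alpha)}(\log t_0)^{-\alpha} f(t_0),
\]
which is the claimed estimate. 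The ``moreover'' assertion is then immediate: for $t_0>1$ one has $(\log t_0)^{-\alpha}>0$, so $f(t_0)\geq 0$ forces $D^\alpha f(t_0)\geq 0$. Part (ii) would be obtained by applying part (i) to $-f$, noting that $-f$ attains a maximum at $t_0$ exactly when $f$ attains a minimum there, and that $D^\alpha$ is linear.

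The main obstacle I anticipate is a compatibility issue between the regularity hypotheses: Proposition \ref{l7} assumes $f \in C^1([1,T])$, whereas Proposition \ref{l8} only assumes $I^{1-\alpha}f \in C^1([1,T])$. Property \ref{p4} is stated for $f \in W^1_2$, which is stronger than what is assumed here. If one wants to avoid strengthening the hypothesis, the cleanest workaround is to mimic the integration-by-parts argument used in the proof of Proposition \ref{l7} but applied directly to $D^\alpha f(t_0) = t \tfrac{d}{dt} I^{1-\alpha}f(t)\big|_{t_0}$; after the change of variables $v=\log t$, $u=\log s$, the operator $D^\alpha$ becomes a standard Riemann--Liouville derivative of $F(u):=f(e^u)$ at $v_0=\log t_0$, and one can then reproduce Al-Refai's inequality for $_{RL}D^\alpha$ in that variable. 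This route establishes the bound under exactly the stated hypothesis and avoids any appeal to the stronger regularity implicit in Property \ref{p4}.
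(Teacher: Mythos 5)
Your argument is exactly the paper's proof: decompose $D^{\alpha}f$ via Property \ref{p4}, apply Proposition \ref{l7} to bound $D_{*}^{\alpha}f(t_0)$, and add back the boundary term so the $f(1)$ contributions cancel, with part (ii) obtained by passing to $-f$. Your observation about the mismatch between the hypothesis $I^{1-\alpha}f\in C^{1}([1,T])$ and the stronger regularity implicitly required by Property \ref{p4} and Proposition \ref{l7} is a real issue that the paper itself glosses over, and your proposed change of variables $v=\log t$ reducing to the Riemann--Liouville case is a sensible way to patch it.
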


\begin{proof} Let a function $I^{1-\alpha}f\left( t \right)\in {{C}^{1}}\left([1, T ]\right)$ and $f\left( t \right)$ attains its maximum value over $\left[1, T \right]$ at a point ${{t}_{0}}\in \left[1, T \right].$ From Property \ref{p4}, we have $$D^{\alpha } f\left( t \right)=\frac{f(1)}{\Gamma(1-\alpha)}\left(\log{t}\right)^{-\alpha}+D_*^{\alpha } f\left( t \right).$$ Using the result in Proposition \ref{l7}, we obtain \begin{align*}D^{\alpha } f\left( t_0 \right)&=\frac{f(1)}{\Gamma(1-\alpha)}\left(\log{t_0}\right)^{-\alpha}+D_*^{\alpha } f\left( t_0 \right)\\&\geq\frac{f(1)}{\Gamma(1-\alpha)}\left(\log{t_0}\right)^{-\alpha}+\frac{1}{\Gamma(1-\alpha) }\left(\log t_0\right)^{-\alpha}\left( f\left( {{t}_{0}} \right)-f\left(1\right) \right)\\&=\frac{1}{\Gamma(1-\alpha)}\left(\log t_0\right)^{-\alpha} f\left( {{t}_{0}} \right),\end{align*} so $D^\alpha f(t_0) \geq 0$ provided $f(t_0) \geq 0.$ The part (i) of Proposition \ref{l8} is proved.

By applying a similar argument to $-f\left( t \right)$, we obtain part (ii).
\end{proof}

\section{Time-fractional diffusion equation with Hadamard type derivative}

In  this section, we  consider the nonlinear time-fractional diffusion equation
\begin{equation}\label{1.1}D_{*,t}^{\alpha }u(x,t)=\nu\Delta_x u(x,t)+F\left( x,t, u\right),\, (x,t)\in G\times \left(1,T \right]=\Omega,\end{equation}
subject to the initial condition
\begin{equation}\label{1.2}u\left( x, 1\right)=\varphi ( x ),\,x \in\bar G,\end{equation}
and boundary condition
\begin{equation}\label{1.3}u\left(x, t\right)=\psi(x,t),\, x\in\partial G,\, 1\leq t\le T,\end{equation}
where $\nu> 0,$ the functions $F\left( x,t, u\right),\varphi \left( x \right),\psi \left(x, t \right)$ are continuous and $\Delta_x$ is a Laplace operator $$\Delta_x u(x,t)=\sum\limits_{j=1}^n \frac{\partial^2 u}{\partial x_j^2}(x,t).$$ Here $G\subset\mathbb{R}^n$ is a bounded domain with smooth boundary $\partial G.$

\subsection{Maximum principle for the linear time-fractional diffusion equation}
In this subsection we shall present the extremum principle for the linear case of equation \eqref{1.1}.

\begin{theorem}\label{t1} Let $u\left( x,t \right)$  satisfy the equation \begin{equation}\label{L_eq} D_{*,t}^{\alpha }u(x,t)=\nu\Delta_x u(x,t)+F\left(x,t\right),\, (x,t)\in G\times \left(1,T \right],\end{equation} with initial-boundary conditions \eqref{1.2}-\eqref{1.3}. If $F\left( x,t \right)\geq 0$ for $\left( x,t \right)\in \overline{\Omega },$ then $$u\left( x,t \right)\ge \underset{\left( x,t \right)\in \overline{\Omega}}{\mathop{\min }}\,\{\psi \left(x, t \right),\varphi \left( x \right)\}\text{ for }\left( x,t \right)\in \overline{\Omega }.$$
\end{theorem}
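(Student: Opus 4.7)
The plan is to proceed by contradiction using the extremum principle for the Hadamard-type derivative, Proposition \ref{l7}. Set $m := \min_{\overline{\Omega}}\{\psi(x,t),\varphi(x)\}$, so that $u \ge m$ on the parabolic boundary $\bigl(\{1\}\times\overline{G}\bigr) \cup \bigl([1,T]\times\partial G\bigr)$. Assume for contradiction that $u(x_*, t_*) < m$ for some $(x_*, t_*) \in \overline{\Omega}$. Since $u$ is continuous on the compact set $\overline{\Omega}$, it attains its global minimum at some point $(x_0, t_0) \in \overline{\Omega}$, and this minimum value is at most $u(x_*, t_*) < m$.

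I would then localize this minimum. If $t_0 = 1$, then $u(x_0, 1) = \varphi(x_0) \ge m$, contradicting $u(x_0, t_0) < m$; if $x_0 \in \partial G$, then $u(x_0, t_0) = \psi(x_0, t_0) \ge m$, again a contradiction. Hence $(x_0, t_0) \in G \times (1, T]$. At such an interior spatial minimum, $\Delta_x u(x_0, t_0) \ge 0$. Simultaneously, the slice $t \mapsto u(x_0, t)$ attains its minimum over $[1,T]$ at $t_0 \in (1,T]$, so Proposition \ref{l7}(ii) yields
$$D^{\alpha}_{*, t} u(x_0, t_0) \le \frac{(\log t_0)^{-\alpha}}{\Gamma(1-\alpha)}\bigl(u(x_0, t_0) - u(x_0, 1)\bigr).$$
Since $u(x_0, 1) = \varphi(x_0) \ge m > u(x_0, t_0)$, the right-hand side is strictly negative, so $D^{\alpha}_{*, t} u(x_0, t_0) < 0$. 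However, evaluating the equation \eqref{L_eq} at $(x_0, t_0)$ gives
$$D^{\alpha}_{*, t} u(x_0, t_0) = \nu\,\Delta_x u(x_0, t_0) + F(x_0, t_0) \ge 0,$$
since $\nu > 0$, $\Delta_x u(x_0, t_0) \ge 0$, and $F \ge 0$ by hypothesis. This contradicts the previous inequality, so no such $(x_*, t_*)$ can exist, and $u \ge m$ on $\overline{\Omega}$.

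The step I expect to be the most delicate is the justification that Proposition \ref{l7}(ii) applies to the one-variable slice $t \mapsto u(x_0, t)$, since the proposition assumes $C^1$ regularity on $[1, T]$ and a classical solution of a fractional-in-time equation need not be $C^1$ all the way down to $t = 1$. Provided the paper's standing notion of ``classical solution'' supplies enough temporal regularity for the Hadamard-type derivative to be defined pointwise (as it must, for the equation to make sense classically), the argument is a clean parabolic extremum principle. In particular, no auxiliary perturbation of the form $\varepsilon (\log t)^{\alpha}$ is needed: the strict inequality $u(x_0, t_0) < m \le u(x_0, 1)$ built into the contradiction hypothesis already supplies the strict sign on $D^{\alpha}_{*, t} u(x_0, t_0)$ required to contradict the non-strict assumption $F \ge 0$.
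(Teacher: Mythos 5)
Your proposal is correct and follows essentially the same route as the paper: locate a would-be negative (relative to $m$) interior minimum, apply Proposition \ref{l7}(ii) to the time slice to get $D^{\alpha}_{*,t}u(x_0,t_0)<0$, and contradict $\nu\Delta_x u(x_0,t_0)+F(x_0,t_0)\ge 0$. The only cosmetic difference is that the paper first shifts to $\tilde u = u - m$ before running the same argument, which changes nothing since the Hadamard-type derivative annihilates constants.
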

\begin{proof}Let $m=\underset{\left( x,t \right)\in \overline{\Omega }}{\mathop{\min }}\,\{\psi \left(x, t \right),\varphi \left( x \right)\}\text{ }$ and $\tilde{u}\left( x,t \right)=u\left( x,t \right)-m.$ Then, from \eqref{1.2}, we obtain $$\tilde{u}\left(x,t \right)=\psi \left(x, t \right)-m\ge 0,\,\,x\in\partial G,\, t\in \left[1, T\right],$$ and $$\tilde{u}\left(x,0 \right)=\varphi \left( x \right)-m\ge 0,\,\,x\in \bar G.$$

Since $$D_{*,t}^{\alpha}\tilde{u}(x,t)=D_{*,t}^{\alpha} u(x,t)$$ and $$\Delta_x\tilde{u}\left( x,t \right)=\Delta_x u\left( x,t \right),$$
it follows that $\tilde{u}\left( x,t \right)$ satisfies \eqref{L_eq}:
$$D_{*,t}^{\alpha}\tilde{u}(x,t)=\nu\Delta_x\tilde{u}\left( x,t \right)+F\left(x,t \right),$$
and initial-boundary conditions
$$\left\{\begin{array}{l}\tilde{u}\left(x, 1\right)=\varphi \left(x\right)-m\geq 0,\,x\in \bar G,\\ \tilde{u}\left(x,t \right)=\psi\left(x, t \right)-m\ge 0,\,x\in\partial G, t\in [1,T].\end{array}\right.$$

Suppose that there exits some $\left( x,t \right) \in \overline{\Omega }$  such that $\tilde{u}\left( x,t \right)$ is negative.
Since $$\tilde{u}\left( x,t \right)\ge 0,\,\,\left( x,t \right)\in \partial G\times \left[1, T \right]\cup \bar G\times \{0\},$$
there is $\left( {{x}_{0}},{{t}_{0}} \right) \in \Omega$  such that $\tilde{u}\left( {{x}_{0}},{{t}_{0}} \right)$ is the negative minimum of
$\tilde{u}$ over $\Omega.$ It follows from Proposition \ref{l7} that

\begin{multline}\label{3.1}D_{*,t}^{\alpha }\tilde{u}\left(x_0,{t}_{0} \right)\le \frac{1}{\Gamma(1-\alpha)}\left(\log t_0\right)^{-\alpha}\left(\tilde{u}\left(x_0, {{t}_{0}} \right)-\varphi\left(x_0\right)+m \right)\\ \le \frac{1}{\Gamma(1-\alpha)}\left(\log t_0\right)^{-\alpha}\tilde{u}\left(x_0, {{t}_{0}} \right)< 0.\end{multline}

Because $\frac{\partial^2 \tilde{u}}{\partial x^2_j}(x_0, t_0)\geq 0,\, j=1,2,...,n,$ we get the inequality $$\Delta_x \tilde{u}(x_0,\, t_0)\geq 0.$$

Therefore at $\left( {{x}_{0}},{{t}_{0}} \right)$, we get $${{D}_{*,t}^{\alpha }}\tilde{u}\left(x_0, t_0 \right)< 0\,\,\, \textrm{and} \,\,\, \nu\Delta_x \tilde{u}\left( {{x}_{0}},{{t}_{0}} \right)+F\left( {{x}_{0}},{{t}_{0}} \right)\ge 0.$$ This contradiction shows that $\tilde{u}\left( x,t \right)\ge 0$ on $\overline{\Omega },$ whereupon  $u\left( x,t \right)\ge m$ on $\overline{\Omega }$.
\end{proof}

\begin{theorem}\label{t2}
Suppose that $u\left( x,t \right)$  satisfies \eqref{L_eq}, \eqref{1.2}, \eqref{1.3}. If $F\left( x,t \right)\le 0$ for $\left( x,t \right)\in \overline{\Omega },$ then $$u\left( x,t \right)\le \underset{\overline{\Omega }}{\mathop{\max }}\,\{\lambda \left( t \right),\mu \left( t \right),\varphi \left( x \right)\},\,\,\left( x,t \right)\in \overline{\Omega }.$$ \end{theorem}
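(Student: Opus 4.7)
The plan is to mirror the proof of Theorem \ref{t1} with the roles of maximum and minimum reversed, reducing to a contradiction via Proposition \ref{l7}(i) instead of (ii). Set
\[
M=\max_{\overline{\Omega}}\{\psi(x,t),\,\varphi(x)\}
\]
(I read the $\lambda(t),\mu(t)$ in the statement as a typographical remnant for the boundary/initial data $\psi,\varphi$) and introduce the shifted function $\tilde u(x,t)=u(x,t)-M$. From Definition \ref{d2} the Hadamard-type derivative $D^{\alpha}_{*,t}$ annihilates constants, and so does $\Delta_x$; hence $\tilde u$ satisfies the same equation \eqref{L_eq}, while $\tilde u(x,1)=\varphi(x)-M\le 0$ on $\bar G$ and $\tilde u(x,t)=\psi(x,t)-M\le 0$ on $\partial G\times[1,T]$.

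Next I would argue by contradiction: assume there is some point of $\overline{\Omega}$ where $\tilde u>0$. Since $\tilde u\le 0$ on the parabolic boundary $\partial G\times[1,T]\cup\bar G\times\{1\}$, this positive supremum must be attained at an interior point $(x_0,t_0)\in\Omega$ with $\tilde u(x_0,t_0)>0$. Applying Proposition \ref{l7}(i) to the one-variable function $t\mapsto\tilde u(x_0,t)\in C^{1}([1,T])$, which attains its maximum at $t_0$, I obtain
\[
D_{*,t}^{\alpha}\tilde u(x_0,t_0)\;\ge\;\frac{(\log t_0)^{-\alpha}}{\Gamma(1-\alpha)}\bigl(\tilde u(x_0,t_0)-\tilde u(x_0,1)\bigr)\;\ge\;\frac{(\log t_0)^{-\alpha}}{\Gamma(1-\alpha)}\tilde u(x_0,t_0)\;>\;0.
\]
On the other hand, since $x_0$ is a spatial maximum of $\tilde u(\cdot,t_0)$ in the open set $G$, we have $\partial^{2}\tilde u/\partial x_j^{2}(x_0,t_0)\le 0$ for each $j$, so $\Delta_x\tilde u(x_0,t_0)\le 0$, and together with the hypothesis $F(x_0,t_0)\le 0$ this yields $\nu\Delta_x\tilde u(x_0,t_0)+F(x_0,t_0)\le 0$, contradicting the equation at $(x_0,t_0)$.

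Consequently $\tilde u(x,t)\le 0$ throughout $\overline{\Omega}$, which gives $u(x,t)\le M$ as claimed. The result for $-u$ via Proposition \ref{l7}(ii) would work equally well, but the shifted-function approach makes the parallel with Theorem \ref{t1} transparent. The only subtle point, and the place I would be most careful, is invoking Proposition \ref{l7} in the parametrized setting: the proposition is stated for functions of a single variable in $C^{1}([1,T])$, and I am using it on the slice $t\mapsto\tilde u(x_0,t)$ with $x_0$ held fixed, which requires that $u$ belong to $C^{1}$ in $t$ on $[1,T]$—this regularity is implicit in the class of classical solutions considered, so no genuine obstacle arises.
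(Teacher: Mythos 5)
Your proof is correct and is exactly the argument the paper intends: Theorem \ref{t2} is stated without proof because it is the mirror image of Theorem \ref{t1}, and your shifted function $\tilde u=u-M$, the reduction to a positive interior maximum, and the contradiction via Proposition \ref{l7}(i) combined with $\Delta_x\tilde u(x_0,t_0)\le 0$ and $F\le 0$ reproduce that symmetric argument faithfully. Your reading of $\lambda(t),\mu(t)$ as a typographical leftover for the boundary/initial data $\psi(x,t),\varphi(x)$ is also the right interpretation, since those symbols are never defined in the paper.
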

Theorem \ref{t1} and \ref{t2} imply the following assertions.
\begin{corollary} Suppose that $u\left( x,t \right)$  satisfy \eqref{L_eq}, \eqref{1.2}, \eqref{1.3}.  If $F\left( x,t \right)\geq 0$ for $\left( x,t \right)\in \overline{\Omega },$ $\varphi\left(x\right)\geq 0,\,x\in\bar G,$ and $\psi\left(x, t \right)\geq 0,\,x\in \partial G,\, t\in[1,T],$ then $$u\left( x,t \right)\geq 0,\,\,\left( x,t \right)\in \overline{\Omega }.$$
\end{corollary}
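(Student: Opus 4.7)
The plan is to deduce this corollary directly from Theorem \ref{t1}, with essentially no additional work. Since the hypotheses $F(x,t) \geq 0$, $\varphi(x) \geq 0$ on $\bar G$, and $\psi(x,t) \geq 0$ on $\partial G \times [1,T]$ are exactly those required by Theorem \ref{t1} (the first one) together with non-negativity of the boundary and initial data, I would simply apply Theorem \ref{t1} to conclude
\[
u(x,t) \geq \min_{(x,t)\in\overline{\Omega}} \{\psi(x,t),\varphi(x)\}.
\]

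Next, I would observe that the right-hand side is non-negative: every value of $\psi$ on $\partial G \times [1,T]$ and every value of $\varphi$ on $\bar G$ is $\geq 0$ by hypothesis, so their minimum over $\overline{\Omega}$ is $\geq 0$. Combining these two facts yields $u(x,t) \geq 0$ for all $(x,t) \in \overline{\Omega}$, which is the claim.

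There is no real obstacle here — the corollary is a direct specialization of Theorem \ref{t1} to the case where the initial and boundary data are themselves non-negative, so that the lower bound $\min\{\psi,\varphi\}$ is automatically non-negative. The only drafting decision is stylistic: whether to state the proof as a single sentence invoking Theorem \ref{t1}, or to unpack the two observations above into two short lines.
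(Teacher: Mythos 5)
Your proposal is correct and matches the paper exactly: the paper states the corollary without proof, noting only that it follows from Theorem \ref{t1}, and the intended argument is precisely your two-line deduction that the lower bound $\min\{\psi,\varphi\}$ is non-negative under the stated hypotheses. Nothing further is needed.
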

\begin{corollary} Suppose that $u\left( x,t \right)$  satisfies \eqref{L_eq}, \eqref{1.2}, \eqref{1.3}. If $F\left( x,t \right)\leq 0$ for $\left( x,t \right)\in \overline{\Omega },$ $\varphi\left(x\right)\leq 0,\,x\in\bar G,$ and $\psi\left(x, t \right)\leq 0,\,x\in \partial G,\, t\in[1,T],$  then $$u\left( x,t \right)\leq 0,\,\,\left( x,t \right)\in \overline{\Omega }.$$
\end{corollary}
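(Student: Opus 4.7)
The plan is to read this corollary as an immediate consequence of Theorem~\ref{t2}. Under the single hypothesis $F(x,t)\le 0$ on $\overline{\Omega}$, Theorem~\ref{t2} already furnishes the pointwise bound
\[
u(x,t)\le \max_{\overline{\Omega}}\{\psi(x,t),\varphi(x)\}\quad\text{for every }(x,t)\in\overline{\Omega}.
\]
All that remains is to invoke the additional sign assumptions on the data: since $\varphi(x)\le 0$ for $x\in\bar G$ and $\psi(x,t)\le 0$ for $(x,t)\in\partial G\times[1,T]$, the right-hand side of the displayed inequality is non-positive. Therefore $u(x,t)\le 0$ on $\overline{\Omega}$, which is the claim.

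If one prefers a self-contained argument that does not merely quote Theorem~\ref{t2}, the proof of Theorem~\ref{t1} adapts almost verbatim. I would assume for contradiction that $u$ attains a strictly positive value somewhere in $\overline{\Omega}$. Because $u(x,1)=\varphi(x)\le 0$ on $\bar G$ and $u(x,t)=\psi(x,t)\le 0$ on $\partial G\times[1,T]$, any such positive maximum $(x_0,t_0)$ must lie in the interior $\Omega$, with $x_0\in G$ and $t_0\in(1,T]$. Applying Proposition~\ref{l7}(i) to the one-variable function $t\mapsto u(x_0,t)$, and using $\varphi(x_0)\le 0 < u(x_0,t_0)$, yields
\[
D_{*,t}^{\alpha}u(x_0,t_0)\ge \frac{(\log t_0)^{-\alpha}}{\Gamma(1-\alpha)}\bigl(u(x_0,t_0)-\varphi(x_0)\bigr)>0.
\]
On the other hand, at an interior spatial maximum $\Delta_x u(x_0,t_0)\le 0$, so together with $F(x_0,t_0)\le 0$ the right-hand side of \eqref{L_eq} satisfies $\nu\Delta_x u(x_0,t_0)+F(x_0,t_0)\le 0$, contradicting the strict positivity above. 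Hence $u\le 0$ throughout $\overline{\Omega}$.

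There is essentially no genuine obstacle here; the result is a one-line specialization of the preceding theorem. The only piece of bookkeeping is notational: the statement of Theorem~\ref{t2} as printed uses symbols $\lambda(t),\mu(t)$ that look like hold-overs from a one-dimensional formulation of the boundary data, whereas in problem \eqref{1.1}--\eqref{1.3} the boundary datum is a single function $\psi(x,t)$; both readings yield the same conclusion once the data are $\le 0$.
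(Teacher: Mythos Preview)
Your proposal is correct and matches the paper's approach: the paper presents this corollary (together with its companion) immediately after Theorem~\ref{t2} with the single sentence ``Theorem~\ref{t1} and~\ref{t2} imply the following assertions,'' so the intended proof is precisely your first argument---apply Theorem~\ref{t2} and note that the maximum of non-positive data is non-positive. Your alternative self-contained argument is also correct and amounts to writing out the (unstated) proof of Theorem~\ref{t2} specialized to non-positive data; your observation about the stray symbols $\lambda(t),\mu(t)$ in the statement of Theorem~\ref{t2} is accurate as well.
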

Theorems \ref{t1} and \ref{t2} are similar to the weak maximum principle for the heat equation. Similar to the classical case, the fractional version of the weak maximum principle can be used to prove the uniqueness of a solution.
\begin{theorem}\label{t3}
The problem \eqref{L_eq}, \eqref{1.2}, \eqref{1.3} has at most one solution.
\end{theorem}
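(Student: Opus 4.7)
The plan is to argue by contradiction in the standard way for linear problems: assume two solutions exist, show that their difference satisfies a homogeneous problem, and then apply the two extremum principles just proved (Theorems \ref{t1} and \ref{t2}) to force the difference to be identically zero.

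First, I would suppose that $u_{1}(x,t)$ and $u_{2}(x,t)$ are two classical solutions of \eqref{L_eq}, \eqref{1.2}, \eqref{1.3} and set $w(x,t)=u_{1}(x,t)-u_{2}(x,t)$. Because $D_{*,t}^{\alpha}$ and $\Delta_{x}$ are both linear operators, $w$ satisfies
\begin{equation*}
D_{*,t}^{\alpha}w(x,t)=\nu\Delta_{x}w(x,t),\qquad (x,t)\in G\times(1,T],
\end{equation*}
together with the homogeneous data $w(x,1)=\varphi(x)-\varphi(x)=0$ for $x\in\bar G$ and $w(x,t)=\psi(x,t)-\psi(x,t)=0$ for $x\in\partial G$, $t\in[1,T]$. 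Thus $w$ solves a problem of the form \eqref{L_eq}--\eqref{1.3} with right-hand side $F\equiv 0$ and with initial and boundary values identically zero.

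Next I would invoke the two extremum principles in sequence. Since $F\equiv 0\ge 0$ and $\min\{\psi-\psi,\varphi-\varphi\}=0$, Theorem \ref{t1} applied to $w$ yields $w(x,t)\ge 0$ on $\overline{\Omega}$. Symmetrically, since $F\equiv 0\le 0$ and $\max\{0,0,0\}=0$, Theorem \ref{t2} applied to $w$ (or equivalently Theorem \ref{t1} applied to $-w$) gives $w(x,t)\le 0$ on $\overline{\Omega}$. Combining these two inequalities, $w\equiv 0$ on $\overline{\Omega}$, so $u_{1}\equiv u_{2}$, which is the required uniqueness.

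I do not anticipate any serious obstacle here: the heavy lifting has already been done in Proposition \ref{l7} and in the proofs of Theorems \ref{t1} and \ref{t2}. The only small point worth being careful about is checking that the regularity class in which the uniqueness statement is posed is closed under subtraction, so that $w=u_{1}-u_{2}$ is itself an admissible test function for Theorems \ref{t1}--\ref{t2} (in particular that $w(\cdot,t)\in C^{1}([1,T])$ pointwise in $x$, so Proposition \ref{l7} applies at a potential interior extremum); this is immediate from linearity of the function spaces involved.
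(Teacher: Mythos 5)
Your proposal is correct and follows essentially the same route as the paper: form the difference of two solutions, observe it satisfies the homogeneous problem with zero data by linearity, and apply Theorems \ref{t1} and \ref{t2} to sandwich it between $0$ and $0$. If anything, your write-up is cleaner than the paper's, which needlessly phrases the argument as a contradiction.
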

\begin{proof}
Let ${{u}_{1}}\left( x,t \right)$ and ${{u}_{2}}\left( x,t \right)$ be two solutions of the problem \eqref{1.1}-\eqref{1.2}. Then,
$$D_{*,t}^{\alpha }\left( {{u}_{1}}\left( x,t \right)-{{u}_{2}}\left( x,t \right) \right)=\nu\Delta_x\left( {{u}_{1}}\left( x,t \right)-{{u}_{2}}\left( x,t \right) \right),$$
with zero initial and boundary conditions for ${{u}_{1}}\left( x,t \right)-{{u}_{2}}\left( x,t \right)$. It follows from Theorems \ref{t1} and \ref{t2} that $${{u}_{1}}\left( x,t \right)-{{u}_{2}}\left( x,t \right)=0,\,\, \textrm{on} \,\,\overline{\Omega }.$$ We have a contradiction. The result then follows.\end{proof}
Theorems \ref{t1} and \ref{t2} can be used to show that a solution $u\left( x,t \right)$ of the problem \eqref{L_eq}, \eqref{1.2}, \eqref{1.3} depends continuously on the initial data $\varphi \left( x \right).$

\begin{theorem}\label{t4}
Suppose $u\left( x,t \right)$ and $\overline{u}\left( x,t \right)$ are the solutions of the problem \eqref{L_eq}, \eqref{1.2}, \eqref{1.3} with homogeneous boundary conditions
corresponding to the initial data $\varphi \left( x \right)$ and $\overline{\varphi }\left( x \right),$ respectively.

If $$\underset{x\in \bar G}{\mathop{\max }}\,
\{\left| \varphi \left( x \right)-\overline{\varphi }\left( x \right) \right|\}\le \delta ,$$ then $$\left| u\left( x,t \right)-\overline{u}\left( x,t \right) \right|\le \delta.$$
\end{theorem}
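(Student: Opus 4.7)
The plan is to reduce the continuous dependence statement to a direct application of the weak maximum/minimum principles already proved (Theorems \ref{t1} and \ref{t2}). Set $w(x,t) = u(x,t) - \overline{u}(x,t)$. By linearity of the operators $D^{\alpha}_{*,t}$ and $\Delta_x$, and because both $u$ and $\overline{u}$ solve \eqref{L_eq} with the \emph{same} source $F(x,t)$, the difference $w$ satisfies the homogeneous equation
\begin{equation*}
D^{\alpha}_{*,t} w(x,t) = \nu \Delta_x w(x,t), \qquad (x,t)\in G\times(1,T],
\end{equation*}
with homogeneous boundary condition $w(x,t) = 0$ on $\partial G \times [1,T]$ (since both solutions share the same boundary data $\psi$) and initial condition $w(x,1) = \varphi(x) - \overline{\varphi}(x)$ for $x \in \bar G$.

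Next I would unpack the hypothesis $\max_{x \in \bar G}|\varphi(x) - \overline{\varphi}(x)| \le \delta$ as the two-sided bound $-\delta \le w(x,1) \le \delta$ on $\bar G$. The source term in the equation for $w$ is identically zero, so it is simultaneously nonnegative and nonpositive. Therefore Theorem \ref{t1} applies and yields
\begin{equation*}
w(x,t) \ge \min_{\overline{\Omega}}\bigl\{0,\ \varphi(x) - \overline{\varphi}(x)\bigr\} \ge -\delta,
\end{equation*}
while Theorem \ref{t2} applies and yields
\begin{equation*}
w(x,t) \le \max_{\overline{\Omega}}\bigl\{0,\ \varphi(x) - \overline{\varphi}(x)\bigr\} \le \delta.
\end{equation*}
Combining the two inequalities gives $|u(x,t) - \overline{u}(x,t)| \le \delta$ on $\overline{\Omega}$, as required.

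Essentially there is no serious obstacle here: the whole argument is bookkeeping, and the real work has already been done in establishing the extremum principle via Proposition \ref{l7}. The only subtlety worth flagging is the need to check that $w$ has enough regularity (namely $w \in C^1$ in $t$ and $C^2$ in $x$, so that the extremum point argument used inside Theorem \ref{t1} applies to $w$). Since $u$ and $\overline{u}$ are classical solutions, their difference inherits the same smoothness, so this verification is immediate.
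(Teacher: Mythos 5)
Your proof is correct and follows essentially the same route as the paper: form the difference $w=u-\overline{u}$, observe it satisfies the homogeneous equation with homogeneous boundary data and initial data $\varphi-\overline{\varphi}$, and apply Theorems \ref{t1} and \ref{t2} to get the two-sided bound. The paper's own proof is just a more compressed version of this same argument.
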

\begin{proof}
The function $\widetilde{u}\left( x,t \right)=u\left( x,t \right)-\overline{u}\left( x,t \right)$ satisfies the equation $$D_{*, t}^{\alpha }\widetilde{u}\left( x,t \right)=\nu\Delta_x\widetilde{u}\left( x,t \right),$$ with initial condition $\widetilde{u}\left( x,1 \right)=\varphi \left( x \right)-\overline{\varphi }\left( x \right)$ and homogeneous boundary condition. It follows from Theorems \ref{t1} and \ref{t2} that
$$\left| \widetilde{u}\left( x,t \right) \right|\le \underset{\bar G}{\mathop{\max }}\,\{\left| \varphi \left( x \right)-\overline{\varphi }\left( x \right) \right|\}.$$
The result then follows.
\end{proof}

\subsection{Uniqueness theorems for the nonlinear time-fractional diffusion equation}\label{subsection2}
We consider the nonlinear time-fractional diffusion equation of the form \eqref{1.1},
subject to the initial and boundary conditions \eqref{1.2}, \eqref{1.3}. We start with the following uniqueness result.

\begin{theorem}\label{4.1}
If $F(x,t,u)$ is nonincreasing with respect to $u$, then the nonlinear fractional diffusion equation \eqref{1.1}, subject to the initial and boundary conditions \eqref{1.2}, \eqref{1.3}, admits at most one solution $u\in {{C}^{2}}(\bar G)\cap {{H}^{1}}((1,T])$
\end{theorem}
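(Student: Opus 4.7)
The plan is to argue by contradiction, mimicking the uniqueness argument from the linear case (Theorem \ref{t3}) but exploiting the monotonicity hypothesis on $F$ to absorb the nonlinear term inside the extremum inequality of Proposition \ref{l7}. Suppose $u_1$ and $u_2$ are two solutions of \eqref{1.1}--\eqref{1.3} and set $w(x,t) = u_1(x,t) - u_2(x,t)$. Then $w$ satisfies the homogeneous boundary and initial conditions $w(x,1) \equiv 0$ on $\bar G$ and $w(x,t) \equiv 0$ on $\partial G \times [1,T]$, together with
$$D^{\alpha}_{*,t} w(x,t) = \nu \Delta_x w(x,t) + F(x,t,u_1) - F(x,t,u_2).$$

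First I would assume $w \not\equiv 0$ and treat, without loss of generality, the case $\max_{\bar\Omega} w > 0$ (the case $\min_{\bar\Omega} w < 0$ being symmetric). Because $w$ vanishes on the parabolic boundary, the positive maximum is attained at some interior point $(x_0, t_0) \in G \times (1, T]$. Since $F(x_0, t_0, \cdot)$ is nonincreasing and $w(x_0, t_0) > 0$ forces $u_1(x_0, t_0) > u_2(x_0, t_0)$, the nonlinear contribution satisfies
$$F(x_0, t_0, u_1(x_0, t_0)) - F(x_0, t_0, u_2(x_0, t_0)) \leq 0.$$
Moreover, $x_0$ is an interior maximum of $x \mapsto w(x, t_0)$, so $\Delta_x w(x_0, t_0) \leq 0$. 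Hence the right-hand side of the equation is nonpositive at $(x_0, t_0)$.

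On the other hand, applying Proposition \ref{l7}(i) to the time slice $t \mapsto w(x_0, t)$, which attains its maximum on $[1,T]$ at $t_0$, and using $w(x_0, 1) = 0$, I obtain
$$D^{\alpha}_{*,t} w(x_0, t_0) \geq \frac{(\log t_0)^{-\alpha}}{\Gamma(1-\alpha)}\bigl(w(x_0, t_0) - w(x_0, 1)\bigr) = \frac{(\log t_0)^{-\alpha}}{\Gamma(1-\alpha)} w(x_0, t_0) > 0.$$
This contradicts the previous nonpositivity, so $\max_{\bar\Omega} w \leq 0$. The symmetric argument applied to $-w$ (or directly via Proposition \ref{l7}(ii)) yields $\min_{\bar\Omega} w \geq 0$, whence $w \equiv 0$ and $u_1 \equiv u_2$.

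The main obstacle is really just the bookkeeping needed to guarantee that the extremum lies strictly in the parabolic interior, so that Proposition \ref{l7} can be legitimately invoked on the time slice; this is automatic from the homogeneity of the initial and boundary data for $w$. A secondary, purely technical, point is that the assumed regularity $u \in C^2(\bar G) \cap H^1((1,T])$ must be enough to justify the integration-by-parts step underlying the extremum estimate, but this is covered by the $W^1_2$-framework of Definitions \ref{d1}--\ref{d2}. Notably, no smallness condition on $F$ or on $T$ is needed: monotonicity alone suffices because the Hadamard-type extremum inequality has the correct sign to combine with the reaction term.
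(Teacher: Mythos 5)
Your proof is correct and follows essentially the same route as the paper: a contradiction at an interior positive maximum (resp.\ negative minimum) of $u_1-u_2$, combining the sign of $\Delta_x w$ and of the nonlinear term with the lower bound of Proposition \ref{l7}. The only differences are cosmetic and in your favour: you use the monotonicity of $F$ directly instead of the mean value theorem (so no differentiability of $F$ in $u$ is needed), and you close the contradiction explicitly via Proposition \ref{l7} rather than deferring to Theorems \ref{t1} and \ref{t2}.
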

\begin{proof}
Assume that ${{u}_{1}}\left( x,t \right)$ and ${{u}_{2}}\left( x,t \right)$ are two solutions of the equation \eqref{1.1} subject to initial and boundary conditions \eqref{1.2}, \eqref{1.3}, and let ${v}\left( x,t \right)={{u}_{1}}\left( x,t \right)-{{u}_{2}}\left( x,t \right).$ Then ${v}\left( x,t \right)$ satisfies the equation
\begin{equation}\label{4.3}{D_{*,t}^{\alpha}{v}\left(x,t \right)}-\nu\Delta_x{{v}\left( x,t \right)}=F\left( x,t,{{u}_{2}} \right)-F\left( x,t,{{u}_{1}} \right), (x,t)\in\Omega,\end{equation} with homogeneous initial and boundary conditions \eqref{1.2}, \eqref{1.3}.
Applying the mean value theorem to $F(x,t,u)$ yields
$$F\left( x,t,{{u}_{2}} \right)-F\left( x,t,{{u}_{1}} \right)=\frac{\partial F}{\partial u}\left( {{u}^{*}} \right)\left( {{u}_{2}}-{{u}_{1}} \right)=-\frac{\partial F}{\partial u}\left( {{u}^{*}} \right)v,$$
where $\left( {{u}^{*}} \right)=(1-\mu){{u}_{1}}+\mu{{u}_{2}}$ for some $0\leq \mu \leq1$. Thus,
$${D_{*,t}^{\alpha}{v}\left(x,t \right)}-\nu\Delta_x{{v}\left( x,t \right)}=-\frac{\partial F}{\partial u}\left( {{u}^{*}} \right)v(x,t).$$
Assume by contradiction that $v$ is not identically zero. Then $v$ has either a positive
 maximum or a negative minimum. At a positive maximum $\left( {{x}_{0}},{{t}_{0}} \right)\in {\Omega }$ and as $F(x,t,u)$ is nonincreaing, we have $$\frac{\partial F}{\partial u}\left( {{u}^{*}} \right)\le 0$$ and $$-\frac{\partial F}{\partial u}\left( {{u}^{*}} \right)v\left( {{x}_{0}},{{t}_{0}} \right)\ge 0,$$ then $${D_{*,t}^{\alpha}{v}\left(x_0,t_0 \right)}-\nu\Delta_x{{v}\left( x_0,t_0 \right)}\ge 0.$$

By using results of Theorems \ref{t1} and  \ref{t2} for a positive maximum and a negative minimum, respectively, we get ${u}_{1}={u}_{2}.$
\end{proof}

\begin{theorem}\label{4.2}
Let ${{u}_{1}}\left( x,t \right)$ and ${{u}_{2}}\left( x,t \right)$ be two solutions of the time-fractional diffusion equation \eqref{1.1} that satisfy the same boundary condition \eqref{1.3} and the initial conditions ${{u}_{1}}\left( x,1 \right)={g}_{1}(x)$ and ${{u}_{2}}\left( x,1 \right)={g}_{2}(x),$ $x\in\bar G.$ If $F(x,t,u)$ is nonincreasing with respect to $u$, then it holds that
$${{\left\| {{u}_{1}}\left( x,t \right)-{{u}_{2}}\left( x,t \right) \right\|}_{C\left(\overline{\Omega }\right)}}\leq {{\left\| {{g}_{1}}\left( x \right)-{{g}_{2}}\left( x \right) \right\|}_{C(\bar G)}}.$$
\end{theorem}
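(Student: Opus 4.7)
The plan is to adapt the contradiction argument from Theorem \ref{4.1} to accommodate the non-homogeneous initial data. Set $M=\|g_{1}-g_{2}\|_{C(\bar G)}$ and $v(x,t)=u_{1}(x,t)-u_{2}(x,t)$. Subtracting the two copies of equation \eqref{1.1} and applying the mean value theorem with $u^{*}=(1-\mu)u_{2}+\mu u_{1}$ for some $\mu\in[0,1]$, one finds
\begin{equation*}
D_{*,t}^{\alpha}v-\nu\Delta_{x}v=F(x,t,u_{1})-F(x,t,u_{2})=c(x,t)\,v,
\end{equation*}
where $c(x,t):=\tfrac{\partial F}{\partial u}(u^{*})\leq 0$ by the nonincreasing hypothesis on $F$. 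Since $u_{1}$ and $u_{2}$ share the boundary datum $\psi$, we have $v=0$ on $\partial G\times[1,T]$, while $v(x,1)=g_{1}(x)-g_{2}(x)$ satisfies $|v(x,1)|\leq M$ on $\bar G$; in particular $|v|\leq M$ on the parabolic boundary of $\Omega$.

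The target bound $\|v\|_{C(\overline{\Omega})}\leq M$ splits into the two one-sided estimates $v\leq M$ and $v\geq -M$. To establish the upper bound I would argue by contradiction: assume $\max_{\overline{\Omega}}v>M$. Because $v\leq M$ on the parabolic boundary, the maximum must be attained at some interior point $(x_{0},t_{0})\in G\times(1,T]$. Applying Proposition \ref{l7}(i) in the time variable to the slice $v(x_{0},\cdot)$ gives
\begin{equation*}
D_{*,t}^{\alpha}v(x_{0},t_{0})\geq\frac{(\log t_{0})^{-\alpha}}{\Gamma(1-\alpha)}\bigl(v(x_{0},t_{0})-v(x_{0},1)\bigr)\geq\frac{(\log t_{0})^{-\alpha}}{\Gamma(1-\alpha)}\bigl(v(x_{0},t_{0})-M\bigr)>0,
\end{equation*}
because $v(x_{0},1)\leq M<v(x_{0},t_{0})$. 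Interior spatial maximality yields $\Delta_{x}v(x_{0},t_{0})\leq 0$, so $-\nu\Delta_{x}v(x_{0},t_{0})\geq 0$, while $c(x_{0},t_{0})\leq 0$ together with $v(x_{0},t_{0})>0$ force $-c(x_{0},t_{0})v(x_{0},t_{0})\geq 0$. Rewriting the equation as $D_{*,t}^{\alpha}v-\nu\Delta_{x}v-cv=0$ at $(x_{0},t_{0})$ then expresses $0$ as a sum of three nonnegative quantities one of which is strictly positive, a contradiction.

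The lower bound $v\geq -M$ is handled symmetrically: either apply the argument above to $-v$, which satisfies an equation of the same structure, or invoke Proposition \ref{l7}(ii) directly at a putative negative interior minimum $(x_{0},t_{0})$ of $v$. In the latter case $v(x_{0},t_{0})<-M\leq v(x_{0},1)$ gives $D_{*,t}^{\alpha}v(x_{0},t_{0})<0$ strictly, spatial minimality yields $\Delta_{x}v(x_{0},t_{0})\geq 0$, and $c(x_{0},t_{0})v(x_{0},t_{0})\geq 0$ (both factors nonpositive), so the three terms of $D_{*,t}^{\alpha}v-\nu\Delta_{x}v-cv$ are nonpositive with strict inequality in the first, again contradicting the equation. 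Combining the two cases delivers $|v(x,t)|\leq M$ throughout $\overline{\Omega}$, which is the desired $C(\overline{\Omega})$ estimate.

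I do not anticipate any deep obstruction; the proof essentially runs in parallel to the uniqueness proof of Theorem \ref{4.1}. The only point requiring care is keeping the signs straight after the linearization, since this is exactly what forces $D_{*,t}^{\alpha}v$, $-\nu\Delta_{x}v$, and $-cv$ to share a common sign at the extremum and makes the strict inequality in Proposition \ref{l7} (coming from $v(x_{0},t_{0})-M>0$) close the contradiction.
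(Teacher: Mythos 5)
Your proof is correct and follows essentially the same route as the paper: linearize $F$ via the mean value theorem, assume the bound fails, locate an interior extremum exceeding $M$ in absolute value, and contradict the equation using Proposition \ref{l7} together with the sign of $\Delta_x v$ and of the zero-order term. In fact your write-up supplies the details that the paper's proof leaves implicit by merely referring to the arguments of Theorems \ref{t1} and \ref{t2}; the direct use of $v(x_0,1)\le M < v(x_0,t_0)$ in Proposition \ref{l7}(i) is a clean substitute for the paper's device of first subtracting the constant $m$.
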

\begin{proof}
Let ${v}\left( x,t \right)$=${u}_{1}(x,t)-{u}_{2}(x,t)$. Then ${v}\left( x,t \right)$ satisfies the equation
\begin{equation}\label{4.6}{D_{*,t}^{\alpha}{v}\left(x,t \right)}-\nu\Delta_x{{v}\left( x,t \right)}=-\frac{\partial F}{\partial u}\left( {{u}^{*}} \right)v(x,t), (x,t)\in\Omega,
\end{equation}
the initial condition
\begin{equation}\label{4.7}
{{v}\left( x,0 \right)}={{g}_{1}}\left( x \right)-{{g}_{2}}\left(x\right), x \in\bar G,
\end{equation}
and the homogeneous boundary condition \eqref{1.3}.
Let
$$
\mathcal{M}={{\left\| {{g}_{1}}\left( x \right)-{{g}_{2}}\left( x \right) \right\|}_{C(\bar G)}},
$$
and assume by contradiction that the result of the Theorem \ref{4.2} is not true. That is,
$$
\|u_1-u_2\|_{C(\bar{\Omega})}\nleq \mathcal{M}.
$$
Then $v$ either has a positive maximum at a point $\left( {{x}_{0}},{{t}_{0}} \right)\in {\Omega }$ with $$v\left( {{x}_{0}},{{t}_{0}} \right)=\mathcal{M}_1>\mathcal{M},$$ or it has a negative minimum at a point $\left( {{x}_{0}},{{t}_{0}} \right)\in {\Omega }$ with $$v\left( {{x}_{0}},{{t}_{0}} \right)=\mathcal{M}_2<-\mathcal{M}.$$ If $$v\left( {{x}_{0}},{{t}_{0}} \right)=\mathcal{M}_1>\mathcal{M},$$ using the initial and boundary conditions of $v$, we have $\left( {{x}_{0}},{{t}_{0}} \right)\in{{\bar\Omega }}.$

An analogous proof of those of Theorem \ref{t1} and Theorem \ref{t2} leads to $\left\| v\left( x,t \right)\right \|\leq{\mathcal{M}};$ this proves the result.
\end{proof}

\section{Time-fractional generalized diffusion equation with Hadamard derivative}

In  this section, in the bounded domain $G\subset\mathbb{R}^n$ with smooth boundary $\partial G$, we consider the time-fractional diffusion equation
\begin{equation}\label{1.1*}u_t(x,t)=\nu D_{t}^{1-\alpha}\Delta_x u(x,t)+F\left( x,t, u\right),\, (x,t)\in G\times \left(1,T \right]=\Omega,\end{equation}
with Cauchy data
\begin{equation}\label{1.2*}u\left( x, 1\right)=\varphi ( x ),\,x \in\bar G,\end{equation}
and a Dirichlet boundary condition
\begin{equation}\label{1.3*}u\left(x, t\right)=\psi(x,t),\, x\in\partial G,\, 1\leq t\le T,\end{equation}
where $\nu> 0,$ the functions $F\left( x,t, u\right),\varphi \left( x \right),\psi \left(x, t \right)$ are continuous. Here $G\subset\mathbb{R}^n$ is a bounded domain with smooth boundary $\partial G.$

\subsection{Maximum principle for the linear generalized diffusion equation with Hadamard derivative}

\vskip.3cm
In this section, we shall present the extremum principle for the linear case of equation \eqref{1.1*}.

\begin{theorem}\label{t1*} Let $u\left( x,t \right)$  satisfies the equation \begin{equation}\label{L_eq*} u_t(x,t)=\nu \Delta_x D_{t}^{1-\alpha} u(x,t)+F\left(x,t\right),\, (x,t)\in \Omega,\end{equation} with initial-boundary conditions \eqref{1.2*}-\eqref{1.3*}. If $F\left( x,t \right)\geq 0$ for $\left( x,t \right)\in \overline{\Omega },$ $\psi(x,t)\geq 0$ for $x\in\partial G,\, 1\leq t\le T$ and $\varphi ( x )\geq 0$ for $x \in\bar G,$ then $$u\left( x,t \right)\ge 0\text{ for }\left( x,t \right)\in \overline{\Omega }.$$
\end{theorem}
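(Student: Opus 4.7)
The plan is to argue by contradiction, in parallel with the proof of Theorem~\ref{t1}. Suppose $u$ is negative at some point of $\overline{\Omega}$. Since $\varphi \geq 0$ on $\bar G$ and $\psi \geq 0$ on $\partial G \times [1,T]$ by hypothesis, the negative infimum of $u$ must be attained at a point $(x_0, t_0) \in G \times (1, T]$, with value $m := u(x_0, t_0) < 0$. At such a point, the usual one-sided extremum conditions yield $u_t(x_0, t_0) \leq 0$ and $\Delta_x u(x_0, t_0) \geq 0$.

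The next step is to recast \eqref{L_eq*} into a form amenable to the extremum inequalities of Section~3. Since $\Delta_x$ and $D_t^{1-\alpha}$ commute for smooth $u$, I would apply the Hadamard fractional integral $I_t^{1-\alpha}$ to both sides of the equation. Property~\ref{p5}, with $\alpha$ replaced by $1-\alpha$, then gives $I_t^{1-\alpha} D_t^{1-\alpha}\Delta_x u = \Delta_x u$, the boundary term vanishing trivially because the Hadamard integral over $[1,1]$ equals zero. Consequently,
\[
I_t^{1-\alpha} u_t(x,t) \;=\; \nu\,\Delta_x u(x,t) + I_t^{1-\alpha} F(x,t).
\]
Evaluated at $(x_0, t_0)$, the right-hand side is non-negative, since $\Delta_x u(x_0, t_0)\geq 0$ and $I_t^{1-\alpha} F(x_0, t_0) \geq 0$ (as $F \geq 0$).

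To derive the contradiction, I would show that $I_t^{1-\alpha} u_t(x_0, t_0)$ is strictly negative. Setting the auxiliary function $g(s) := u(x_0, s) - m$, which is non-negative on $[1, t_0]$ and satisfies $g(t_0) = 0$ and $g(1) = \varphi(x_0) - m > 0$, and integrating by parts in the integral defining $I_t^{1-\alpha} u_t$ in the spirit of the proof of Proposition~\ref{l7}, one extracts a strictly negative boundary contribution of order $-\frac{(\log t_0)^{-\alpha}}{\Gamma(1-\alpha)}(\varphi(x_0) - m)$, together with a remainder integral of $g(s)$ weighted against the derivative of the kernel $s^{-1}(\log(t_0/s))^{-\alpha}$. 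Once the remainder is shown not to overpower the negative leading term, the desired strict negativity follows, contradicting the non-negativity of the right-hand side.

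The main technical obstacle is precisely this control on the remainder. Unlike the Riemann--Liouville case, where $I^{1-\alpha}u_t$ coincides exactly with the Caputo derivative ${}^{C}D^\alpha u$ and the standard extremum principle closes the argument at once, in the Hadamard setting $I_t^{1-\alpha} u_t$ differs from the Caputo--Hadamard derivative $D_{*,t}^\alpha u$ of Definition~\ref{d2} by an extra weight $1/s$ inside the integrand. The derivative of the kernel $s^{-1}(\log(t_0/s))^{-\alpha}$ changes sign on $(1, t_0)$ whenever $\log t_0 > \alpha$, so the monotone-kernel bound of Proposition~\ref{l7} does not transfer verbatim. A refined integration-by-parts estimate is therefore required; equivalently, one may first multiply the equation by $t$ in order to recover the exact identity $D_{*,t}^\alpha u = I_t^{1-\alpha}(t u_t)$ on the left-hand side, and then analyze the resulting integro-differential equation using Theorem~\ref{t1}.
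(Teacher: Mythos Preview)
Your reduction has a genuine gap at precisely the place you flag, and neither of your two suggested fixes closes it. After applying $I_t^{1-\alpha}$ you need $I_t^{1-\alpha}u_t(x_0,t_0)<0$; as you observe, the kernel $s^{-1}(\log(t_0/s))^{-\alpha}$ is monotone only when $\log t_0\le\alpha$, and for $t_0>e^{\alpha}$ your integration-by-parts remainder can genuinely cancel the boundary term. Your alternative of first multiplying the equation by $t$ does not help either: applying $I_t^{1-\alpha}$ to $t u_t$ indeed produces $D_{*,t}^{\alpha}u$, but on the right-hand side you obtain $\nu\,I_t^{1-\alpha}\bigl(t\,\Delta_x D_t^{1-\alpha}u\bigr)$, not $\nu\,\Delta_x u$, so the equation is no longer of the form treated by Theorem~\ref{t1}.

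The paper's proof avoids this difficulty by a different device: it never integrates the equation. Instead it perturbs to $\tilde u=u+\mu(\log t)^{\alpha}$, assumes a negative minimum at $(x_0,t_0)$, and then sets $w(x,t)=D_t^{1-\alpha}\tilde u(x,t)$. The point is that $w$ itself satisfies an equation of Hadamard type, $D_t^{\alpha}w=\nu\,\Delta_x w+\tilde F$, with $\tilde F\ge0$ and with nonnegative initial and boundary data. One application of Proposition~\ref{l8}(ii) at $(x_0,t_0)$ gives $w(x_0,t_0)<0$, so $w$ has a negative interior minimum at some $(x_*,t_*)$; a second application of Proposition~\ref{l8}(ii) to $w$ there contradicts $\nu\,\Delta_x w(x_*,t_*)+\tilde F(x_*,t_*)\ge0$. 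The key idea you are missing is this passage to $w=D_t^{1-\alpha}\tilde u$ and the resulting \emph{double} use of the extremum principle, which sidesteps the non-monotone kernel entirely.
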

\begin{proof} For any $\mu\geq 0,$ let $$\tilde{u}(x, t) = u(x, t) + \mu (\log t)^{\alpha}.$$
Then,  we have
$$\tilde{u}_t(x,t)=u_t(x,t)+\mu\alpha\frac{(\log t)^{\alpha-1}}{t}, (x,t)\in\Omega,$$ $$\tilde{u}(x,1)=u(x,1)=\varphi(x),x\in\bar G,$$ $$\tilde{u}(x,t)=u(x,t)+\mu (\log t)^{\alpha}=\psi(x,t)+\mu (\log t)^{\alpha},x\in\partial G, t\in [1,T].$$
Since $\Delta_x\tilde{u}(x,t)=\Delta_x u(x,t),$ we get $$\Delta_x D_{t}^{1-\alpha}\tilde{u}(x,t)=\Delta_x D_{t}^{1-\alpha}u(x,t).$$
Hence, $\tilde{u}(x, t)$ satisfies the equation
\begin{align*}\tilde{u}_t(x,t)&=\nu\Delta_x D_{t}^{1-\alpha}\tilde{u}(x,t)+F\left(x,t\right)\\&+\mu\alpha\frac{(\log t)^{\alpha-1}}{t},\, (x,t)\in G\times \left(1,T \right].\end{align*}

Suppose that there exists some $\tilde{u}(x, t) \in \bar\Omega$ such that $\tilde{u}(x, t) < 0.$ Since $$\tilde{u}(x, t) \geq 0,\, (x, t) \in \partial G\times[1, T]\cup\bar G\times{0},$$ there is $(x_0, t_0) \in G\times(1, T]$ such that $\tilde{u}(x_0, t_0)$ is the negative minimum of $\tilde{u}$ over $\bar\Omega.$

It follows from Proposition \ref{l8} (ii) that

\begin{equation}\label{3.1*}D_{t}^{1-\alpha }\tilde{u}\left(x_0,{t}_{0} \right)\le \frac{1}{\Gamma(\alpha)}\left(\log t_0\right)^{\alpha-1}\tilde{u}\left(x_0, {{t}_{0}} \right) < 0.\end{equation}

Let $w\left( x,t \right)=D_{t}^{1-\alpha }\tilde{u}\left( x,t \right).$
Since $\tilde{u}\left( x,t \right)$ is bounded in $\overline{\Omega },$ then we have
\begin{equation}\label{3.2*}
I_t^\alpha u\left(x, t \right) = {\rm{
}}\frac{1}{{\Gamma \left( \alpha \right)}}\int\limits_1^t {\left(
\log\frac{t}{s} \right)^{\alpha  - 1} u\left(x, s \right)} \frac{ds}{s} \to 0\text{ as }t\to 1.
\end{equation}
It follows from Property \ref{p6} that
$$D_{t}^{\alpha}w\left( x,t \right)=D_{t}^{\alpha}D_{t}^{1-\alpha }\tilde{u}\left( x,t \right)=\tilde{u}_t(x,t).$$
From Property \ref{p7}, for any $t > 1$, we get
\begin{align*}D_{t}^{1-\alpha }\tilde{u}\left(x,t \right)&=D_{t}^{1-\alpha }u\left( x,t \right)+\mu D_{t}^{1-\alpha}(\log t)^{\alpha} \\&= D_{t}^{1-\alpha }u\left( x,t \right)+\mu\frac{\Gamma(\alpha+1)}{\Gamma(2\alpha)} (\log t)^{2\alpha-1}.\end{align*}
It follows from Property \ref{p4} that
\begin{equation}\label{4.7}D_t^{1-\alpha } u\left(x, t \right)=\frac{1}{\Gamma(1-\alpha)}\varphi(x)\left(\log t\right)^{\alpha-1}+D_{*,t}^{\alpha } u\left(x, t \right)\,\,\,\textrm{for}\,\,\,t>1.\end{equation}
Since the left-hand side of \eqref{4.7} and the first term of the right-hand side of \eqref{4.7} exist, it
follows that the second term on the right-hand side exists and tends to $0$ as $t \rightarrow 1.$ As
$t \rightarrow 1,$ $\varphi(x)\left(\log t\right)^{\alpha-1}\geq 0.$ Therefore, $$D_t^{1-\alpha}u(x, t) > 0\,\,\, \textrm{when} \,\,\,t \rightarrow 1.$$ Hence, we obtain
\begin{align*}w(x,t)=D_t^{1-\alpha}\tilde{u}(x,t)=D_{t}^{1-\alpha }u\left( x,t \right)+\mu\frac{\Gamma(\alpha+1)}{\Gamma(2\alpha)} (\log t)^{2\alpha-1}.
\end{align*}
Furthermore, it follows from the boundary condition of $\tilde{u}(x, t)$ that
\begin{align*}w(x,t)&=D_t^{1-\alpha}\tilde{u}(x,t)=D_t^{1-\alpha}\psi(x,t)\\&+\mu\frac{\Gamma(\alpha+1)}{\Gamma(2\alpha)} (\log t)^{2\alpha-1}\geq 0,\, t\geq 1.
\end{align*}
Therefore, $w(x, t)$ satisfies the problem
\begin{align*}&D_t^\alpha w(x,t)=\nu\Delta_x w(x,t)+\tilde{F}(x,t),\,(x,t)\in\Omega, \\& w(x,1)\geq 0, x\in \bar G, \\& w(x,t)\geq 0,\,x\in\partial G,\,1\leq t\leq T,\end{align*}
where $\tilde{F}(x,t)=F(x,t)+\mu\alpha (\log t)^{\alpha-1}/t.$

From \eqref{3.1*}, we have $w(x_0, t_0) < 0.$ Since $w(x, t) \geq 0$ on the boundary, there exists $(x_*, t_*) \in \Omega$ such that $w(x_*, t_*)$ is the negative minimum of function $w(x, t)$ in $\bar\Omega.$  It follows from Proposition \eqref{l8} part (ii) that
$$D^{\alpha }w\left(x_*, {{t}_{*}} \right)\le \frac{1}{\Gamma(1-\alpha)}\left(\log t_*\right)^{-\alpha}w\left(x_*, {{t}_{*}} \right)<0.$$
Since $w(x_*, t_*)$ is a local minimum, we obtain $\frac{\partial^2 w}{\partial x^2_j}w(x_*, t_*) \geq 0,\, j=1,2,...,n,$ and $\Delta_x w(x_*,\, t_*)\geq 0.$

Therefore at $\left( {{x}_{*}},{{t}_{*}} \right)$, we get $${{D}_{*,t}^{\alpha }}w\left(x_*, t_* \right)< 0\,\,\, \textrm{and} \,\,\,\nu\Delta_x w\left( {{x}_{*}},{{t}_{*}} \right)+F\left( {{x}_{*}},{{t}_{*}} \right)\ge 0.$$ This contradiction shows that $w\left( x,t \right)\ge 0$ on $\overline{\Omega },$ whereupon  $$u\left( x,t \right)\ge -\mu (\log t)^{\alpha}\,\,\, \textrm{on} \,\,\,\overline{\Omega }$$  for any $\mu.$ Since $\mu$ is arbitrary, we have $u(x, t) \geq 0$ on $\bar\Omega.$
\end{proof}
A similar result can be obtained for the nonpositivity of the solution $u(x, t)$ by considering $-u(x, t)$ when $F(x,t)\leq 0,$ $\varphi(x) \leq 0$ and $\psi(x, t) \leq 0.$

\begin{theorem}\label{t2*}
Let $u\left( x,t \right)$  satisfy the equation \eqref{L_eq*} with initial-boundary conditions \eqref{1.2*}-\eqref{1.3*}. If $F\left( x,t \right)\leq 0$ for $\left( x,t \right)\in \overline{\Omega },$ $\psi(x,t)\leq 0$ for $x\in\partial G,\, 1\leq t\le T$ and $\varphi ( x )\leq 0$ for $x \in\bar G,$ then $$u\left( x,t \right)\le 0\text{ for }\left( x,t \right)\in \overline{\Omega }.$$ \end{theorem}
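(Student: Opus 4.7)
The plan is to reduce Theorem \ref{t2*} to the already proven Theorem \ref{t1*} via a sign change, which is exactly the route suggested by the paragraph preceding the statement. The operator $\partial_t - \nu\Delta_x D_t^{1-\alpha}$ on the left-hand side of \eqref{L_eq*} is linear in $u$: linearity of $\partial_t$ and $\Delta_x$ is obvious, and linearity of $D_t^{1-\alpha}$ is immediate from the integral representation in Definitions \ref{d1}--\ref{d2}. Consequently, multiplying $u$ by $-1$ only flips signs on the source term and on the initial and boundary data.

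Concretely, I would introduce the auxiliary function $v(x,t) := -u(x,t)$ and verify that it satisfies the linear equation
\[
v_t(x,t) = \nu \Delta_x D_t^{1-\alpha} v(x,t) + \tilde F(x,t), \qquad (x,t) \in \Omega,
\]
with $\tilde F(x,t) = -F(x,t)$, initial condition $v(x,1) = -\varphi(x)$ for $x \in \bar G$, and boundary condition $v(x,t) = -\psi(x,t)$ for $x \in \partial G$, $1 \le t \le T$. The hypotheses $F \le 0$ on $\bar\Omega$, $\varphi \le 0$ on $\bar G$, and $\psi \le 0$ on $\partial G \times [1,T]$ translate directly to $\tilde F \ge 0$, $-\varphi \ge 0$, and $-\psi \ge 0$, so $v$ meets every hypothesis of Theorem \ref{t1*}.

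Applying Theorem \ref{t1*} to $v$ then yields $v(x,t) \ge 0$ on $\bar\Omega$, which is precisely the claimed bound $u(x,t) \le 0$ on $\bar\Omega$.

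I do not anticipate any real obstacle, because the substantive work—constructing the auxiliary function $\tilde u = u + \mu(\log t)^\alpha$, passing to $w = D_t^{1-\alpha}\tilde u$, and combining Propositions \ref{l7} and \ref{l8} at a hypothetical negative minimum to extract a contradiction—has already been carried out inside the proof of Theorem \ref{t1*}. The only point worth flagging explicitly is that $D_t^{1-\alpha}$ commutes with scalar multiplication by $-1$, which is transparent from Definition \ref{d2}; no new extremum estimate is required.
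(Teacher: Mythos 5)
Your proposal is correct and is exactly the route the paper takes: the authors dispose of Theorem \ref{t2*} with the single remark that it follows ``by considering $-u(x,t)$'' in Theorem \ref{t1*}, and your write-up simply makes that sign-flip reduction explicit, correctly noting that $\partial_t$, $\Delta_x$, and $D_t^{1-\alpha}$ are all linear so the hypotheses on $-F$, $-\varphi$, $-\psi$ become those of Theorem \ref{t1*}. No discrepancy to report.
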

Theorem \ref{t1*} and \ref{t2*} imply the following assertions.
\begin{theorem}\label{t1**} Suppose that $u\left( x,t \right)$  satisfies \eqref{L_eq*}, \eqref{1.2*}, \eqref{1.3*} If $F\left( x,t \right)\geq 0$ for $\left( x,t \right)\in \overline{\Omega },$ then $$u\left( x,t \right)\geq \min\limits_{(x,t)\in\bar\Omega}\left\{\varphi(x),\psi(x,t)\right\},\,\,\left( x,t \right)\in \overline{\Omega }.$$
\end{theorem}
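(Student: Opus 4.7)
The plan is to reduce Theorem \ref{t1**} to the already proved Theorem \ref{t1*} by a constant shift. Set
$$m:=\min_{(x,t)\in\overline{\Omega}}\{\varphi(x),\psi(x,t)\},$$
which may be positive, negative, or zero, and define $\tilde{u}(x,t):=u(x,t)-m$. The strategy is to verify that $\tilde u$ satisfies an equation of the same form as \eqref{L_eq*} with nonnegative source, initial, and boundary data, and then invoke Theorem \ref{t1*}.

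First I would check that the shift preserves the equation. Since $m$ is a scalar constant, $\tilde u_t=u_t$ and $\Delta_x\tilde u=\Delta_x u$. The only nontrivial point is the behavior of the Hadamard derivative on constants: applying Property \ref{p6} with $\beta=1$, $a=1$ (or a short direct computation using the substitution $u=\log(t/s)$) gives
$$D_t^{1-\alpha}(m)=\frac{m(\log t)^{\alpha-1}}{\Gamma(\alpha)},$$
which is independent of $x$, so $\Delta_x D_t^{1-\alpha}(m)=0$. Hence $\tilde u$ still satisfies
$$\tilde u_t(x,t)=\nu\Delta_x D_t^{1-\alpha}\tilde u(x,t)+F(x,t)\quad\text{on }\Omega,$$
with the same nonnegative $F$. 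By the very choice of $m$, the shifted data satisfy $\tilde u(x,1)=\varphi(x)-m\ge 0$ on $\bar G$ and $\tilde u(x,t)=\psi(x,t)-m\ge 0$ on $\partial G\times[1,T]$.

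All hypotheses of Theorem \ref{t1*} are now met for $\tilde u$, and I would simply apply that theorem to conclude $\tilde u\ge 0$ on $\overline{\Omega}$, i.e., $u(x,t)\ge m$, which is the desired estimate. The only subtle step is the shift-invariance calculation, worth emphasizing because, unlike the Caputo or integer-order situation, the Hadamard derivative of a constant is \emph{not} zero; one must rely on the fact that the residual term $m(\log t)^{\alpha-1}/\Gamma(\alpha)$ is purely $t$-dependent and is therefore annihilated by $\Delta_x$ (which is consistent with the way the equation \eqref{L_eq*} is written, with $\Delta_x$ outside the fractional derivative in time).
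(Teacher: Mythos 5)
Your proposal is correct and follows essentially the same route as the paper: shift by the constant $m=\min\{\varphi,\psi\}$, observe that $\tilde u_t=u_t$ and $\Delta_x D_t^{1-\alpha}\tilde u=\Delta_x D_t^{1-\alpha}u$ so that $\tilde u$ still satisfies \eqref{L_eq*} with nonnegative data, and invoke Theorem \ref{t1*}. Your explicit remark that $D_t^{1-\alpha}(m)=m(\log t)^{\alpha-1}/\Gamma(\alpha)$ is nonzero but $x$-independent, hence killed by $\Delta_x$, is a welcome justification of a step the paper states without comment.
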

\begin{proof} Let $m =\min\limits_{(x,t)\in\bar\Omega}\left\{\varphi(x),\psi(x,t)\right\}$ and $\tilde{u}(x,t)=u(x,t)-m.$ Then, $$\tilde{u}(x,1)=\varphi(x)-m\geq 0,\,\,x\in\bar G,$$ $$\tilde{u}(x,x)=\psi(x,t)-m\geq 0,\,\, x\in\partial G,\, 1\leq t\leq T.$$ Since $$\tilde{u}_t(x,t)=u_t(x,t),$$ $$\Delta_x D_t^{1-\alpha}\tilde{u}(x,t)=\Delta_x D_t^{1-\alpha}u(x,t),$$ it follows that $u(x, t)$ satisfies \eqref{L_eq*}. Thus, it follows from an argument similar to the proof of Theorem \ref{t1*} that $$\tilde{u}(x,t)\geq 0,\,(x,t)\in\bar\Omega.$$ That is, $$u\left( x,t \right)\geq \min\limits_{(x,t)\in\bar\Omega}\left\{\varphi(x),\psi(x,t)\right\},\,\,\left( x,t \right)\in \overline{\Omega }.$$ The theorem \ref{t1**} is proved.
\end{proof}
A similar result can be obtained for the nonpositivity of the solution $u(x, t)$ by considering $-u(x, t).$
\begin{theorem} Suppose that $u\left( x,t \right)$  satisfies \eqref{L_eq*}, \eqref{1.2*}, \eqref{1.3*}. If $F\left( x,t \right)\leq 0$ for $\left( x,t \right)\in \overline{\Omega },$ then $$u\left( x,t \right)\leq \max\limits_{(x,t)\in\bar\Omega}\left\{\varphi(x),\psi(x,t)\right\},\,\,\left( x,t \right)\in \overline{\Omega }.$$
\end{theorem}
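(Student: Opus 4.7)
The plan is to reduce the claim to Theorem \ref{t1**} (the minimum-principle version) by the sign-flip substitution $v(x,t) := -u(x,t)$, which is precisely the device the authors signal in the remark immediately preceding the statement. The argument is essentially a one-liner once the hypotheses of Theorem \ref{t1**} are verified for $v$ in place of $u$.

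First I would use the linearity of $\partial_t$, $\Delta_x$, and $D_t^{1-\alpha}$ to check that $v$ solves the same PDE \eqref{L_eq*} but with source $-F(x,t)$, initial datum $-\varphi(x)$, and boundary datum $-\psi(x,t)$. Because $F \leq 0$ on $\bar\Omega$ by hypothesis, the transformed source $-F$ is nonnegative, so Theorem \ref{t1**} applies to $v$ and yields
\begin{equation*}
v(x,t) \;\geq\; \min_{(x,t) \in \bar\Omega}\{-\varphi(x),\, -\psi(x,t)\} \;=\; -\max_{(x,t) \in \bar\Omega}\{\varphi(x),\, \psi(x,t)\}.
\end{equation*}
Substituting back $u = -v$ then gives the desired upper bound
\begin{equation*}
u(x,t) \;\leq\; \max_{(x,t) \in \bar\Omega}\{\varphi(x),\, \psi(x,t)\}, \qquad (x,t) \in \bar\Omega.
\end{equation*}

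I do not anticipate any real obstacle here: the regularity of $v$ matches that of $u$ exactly, so the hypotheses of Theorem \ref{t1**} transfer without fuss, and the elementary identity $\min\{-a,-b\} = -\max\{a,b\}$ handles the sign flip cleanly for the data on the parabolic boundary $\partial G \times [1,T] \cup \bar G \times \{1\}$. The only small bookkeeping point is that the $\max$ on the right of the target inequality must be taken over the same set on which Theorem \ref{t1**} takes its $\min$; this is automatic since both theorems refer to the same data pair $(\varphi,\psi)$ on $\bar G$ and $\partial G \times [1,T]$ respectively. An alternative that avoids even this substitution would be to set $\tilde u := M - u$ with $M = \max\{\varphi,\psi\}$ and invoke Theorem \ref{t1*} directly to conclude $\tilde u \geq 0$; the two arguments are equivalent, but the $v = -u$ route is the most economical given that Theorem \ref{t1**} is stated in exactly the needed form.
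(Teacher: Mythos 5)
Your proposal is correct and matches the paper's intended argument exactly: the authors dispose of this theorem with the one-line remark that it follows ``by considering $-u(x,t)$,'' which is precisely your sign-flip reduction to Theorem \ref{t1**} via $v=-u$, $-F\geq 0$, and the identity $\min\{-\varphi,-\psi\}=-\max\{\varphi,\psi\}$. Nothing further is needed.
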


\subsection{Uniqueness results for the Linear and Nonlinear time-fractional diffusion equation}
\vskip.3cm
The maximum principle for the time-fractional diffusion equation \eqref{L_eq*} can be used to prove the uniqueness of a solution.
\begin{theorem}\label{t3*}
The problem \eqref{L_eq*}, \eqref{1.2*}, \eqref{1.3*} has at most one solution.
\end{theorem}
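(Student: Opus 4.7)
The plan is to reduce uniqueness to the maximum/minimum principles already established, in exactly the same spirit as the proof of Theorem \ref{t3}. Suppose $u_1(x,t)$ and $u_2(x,t)$ are two solutions of the problem \eqref{L_eq*}, \eqref{1.2*}, \eqref{1.3*}. Set $w(x,t) = u_1(x,t) - u_2(x,t)$. By linearity of the equation in $u$ and of the initial and boundary data, $w$ satisfies
\begin{equation*}
w_t(x,t) = \nu\,\Delta_x D_t^{1-\alpha} w(x,t),\qquad (x,t)\in\Omega,
\end{equation*}
together with the homogeneous initial condition $w(x,1)=\varphi(x)-\varphi(x)=0$ on $\bar G$ and the homogeneous boundary condition $w(x,t)=\psi(x,t)-\psi(x,t)=0$ on $\partial G\times[1,T]$.

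The key observation is that for this homogeneous problem, the source term $\tilde F(x,t)\equiv 0$ satisfies \emph{both} $\tilde F\geq 0$ and $\tilde F\leq 0$ on $\overline{\Omega}$, and the data $\varphi\equiv 0$, $\psi\equiv 0$ satisfy both sign conditions simultaneously. Thus Theorem \ref{t1*} is applicable and yields $w(x,t)\geq 0$ on $\overline{\Omega}$, while Theorem \ref{t2*} is also applicable and yields $w(x,t)\leq 0$ on $\overline{\Omega}$. Combining these two inequalities forces $w(x,t)\equiv 0$ on $\overline{\Omega}$, i.e., $u_1\equiv u_2$.

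I do not anticipate a serious obstacle here: the theorem is a direct corollary of Theorems \ref{t1*} and \ref{t2*}, and all the analytic difficulty (handling the singular kernel of $D_t^{1-\alpha}$, the auxiliary perturbation $\mu(\log t)^{\alpha}$, and the transition to the function $w=D_t^{1-\alpha}\tilde u$) was already absorbed into the proof of Theorem \ref{t1*}. The only point worth checking is that the difference $w$ lies in the admissible class required by those theorems, but this is automatic once one assumes $u_1,u_2$ are classical solutions in the same regularity class (so that the Hadamard integral $I^{1-\alpha}w$ and its $t$-derivative inherit the needed continuity). No contradiction argument is needed beyond the two one-sided bounds.
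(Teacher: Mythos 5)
Your proof is correct and follows exactly the same route as the paper's: form the difference $w=u_1-u_2$, observe it solves the homogeneous problem with zero data, and apply Theorem \ref{t1*} and Theorem \ref{t2*} (both applicable since $0$ is simultaneously $\geq 0$ and $\leq 0$) to conclude $w\equiv 0$. Your version merely spells out the sign-condition check and the regularity remark that the paper leaves implicit.
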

\begin{proof}
Let ${{u}_{1}}\left( x,t \right)$ and ${{u}_{2}}\left( x,t \right)$ be two solutions of the initial-boundary value problem \eqref{L_eq*}, \eqref{1.2*}, \eqref{1.3*} and $\hat{u}(x,t)={{u}_{1}}\left( x,t \right)-{{u}_{2}}\left( x,t \right)$. Then,
$$\hat{u}_t\left( x,t \right)=\nu\Delta_x D_{t}^{1-\alpha}\hat{u}\left( x,t \right),$$
with homogeneous initial and boundary conditions \eqref{1.2*}, \eqref{1.3*} for $\hat{u}\left( x,t \right)$. It follows from Theorems \ref{t1*} and \ref{t2*} that $\hat{u}\left( x,t \right)=0$ on $\overline{\Omega}.$ Consequently ${{u}_{1}}\left( x,t \right)={{u}_{2}}\left( x,t \right).$ The result then follows.\end{proof}
Theorems \ref{t1*} and \ref{t2*} can be used to show that a solution $u\left( x,t \right)$ of the problem \eqref{L_eq*}, \eqref{1.2*}, \eqref{1.3*} depends continuously on the initial data $\varphi \left( x \right).$

\begin{theorem}\label{t4*}
Suppose $u\left( x,t \right)$ and $\bar{u}\left( x,t \right)$ are the solutions of the equation \eqref{L_eq*} that satisfy the same boundary condition \eqref{1.3*} and the initial conditions $u\left( x,1 \right)=\varphi(x)$ and $\bar{u}\left( x,1 \right)=\bar{\varphi}(x),$ $x\in\bar G.$

If $$\underset{x\in \bar G}{\mathop{\max }}\,
\{\left| \varphi \left( x \right)-\bar{\varphi }\left( x \right) \right|\}\le \delta,$$ then $\left| u\left( x,t \right)-\bar{u}\left( x,t \right) \right|\le \delta.$
\end{theorem}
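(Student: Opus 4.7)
My plan is to mimic the proof of Theorem \ref{t4}, adapted to the generalized diffusion setting. First I would introduce the difference $\widetilde{u}(x,t) = u(x,t) - \bar{u}(x,t)$. By linearity of equation \eqref{L_eq*} and the fact that both $u$ and $\bar u$ share the same forcing term $F(x,t)$ and the same Dirichlet data $\psi(x,t)$, the function $\widetilde{u}$ satisfies
\begin{equation*}
\widetilde{u}_t(x,t) = \nu\,\Delta_x D_t^{1-\alpha}\widetilde{u}(x,t),\qquad (x,t)\in \Omega,
\end{equation*}
with homogeneous boundary condition $\widetilde{u}(x,t)=0$ for $x\in\partial G$, $1\le t\le T$, and the (nonzero) initial data $\widetilde{u}(x,1) = \varphi(x)-\bar{\varphi}(x)$ on $\bar G$.

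Next, I would observe that the forcing term for $\widetilde{u}$ is identically zero, so it simultaneously satisfies $F\ge 0$ and $F\le 0$ on $\overline{\Omega}$. This lets me apply both the minimum principle (Theorem \ref{t1**}) and its maximum counterpart to $\widetilde{u}$. Since the boundary data is $0$, the minimum principle gives
\begin{equation*}
\widetilde{u}(x,t)\ge \min_{x\in\bar G}\bigl\{\varphi(x)-\bar\varphi(x),\,0\bigr\}\ge -\max_{x\in \bar G}|\varphi(x)-\bar\varphi(x)|\ge -\delta,
\end{equation*}
while the maximum principle analogously yields
\begin{equation*}
\widetilde{u}(x,t)\le \max_{x\in \bar G}\bigl\{\varphi(x)-\bar\varphi(x),\,0\bigr\}\le \max_{x\in \bar G}|\varphi(x)-\bar\varphi(x)|\le \delta.
\end{equation*}
Combining the two inequalities gives $|u(x,t)-\bar u(x,t)|\le \delta$ on $\overline{\Omega}$, as required.

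Since the heavy analytic lifting has already been done in Theorems \ref{t1*}--\ref{t1**} (the delicate two-level argument via the auxiliary $\mu(\log t)^\alpha$ perturbation and Proposition \ref{l8}), the present proof should be essentially mechanical. The only point that needs a line of justification is the passage from ``$F\equiv 0$'' to the simultaneous applicability of both sign hypotheses of the maximum/minimum principle. No new estimate on the Hadamard derivative is needed, and the argument does not interact with $\alpha$ in any nontrivial way beyond what is already encoded in the extremum principles.
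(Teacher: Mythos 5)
Your proof is correct and follows essentially the same route as the paper: form the difference $\widetilde{u}=u-\bar{u}$, note it solves the homogeneous equation with zero boundary data, and squeeze it between $\pm\delta$ via the extremum principles. If anything, your choice to invoke Theorem \ref{t1**} and its maximum counterpart (rather than Theorems \ref{t1*}--\ref{t2*}, which the paper cites but which require sign conditions on the initial data that $\varphi-\bar{\varphi}$ need not satisfy) is the more precise version of the argument.
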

\begin{proof}
The function $\tilde{u}\left( x,t \right)=u\left( x,t \right)-\bar{u}\left( x,t \right)$ satisfies the equation $$\tilde{u}_t\left( x,t \right)=\nu\Delta_x D_{t}^{1-\alpha}\tilde{u}\left( x,t \right),$$ with initial condition $\tilde{u}\left(x, 1\right)=\varphi \left( x \right)-\bar{\varphi }\left( x \right)$ and boundary condition \eqref{1.3*}. It follows from Theorems \ref{t1*} and \ref{t2*} that
$$\left| \tilde{u}\left( x,t \right) \right|\le \underset{\bar G}{\mathop{\max }}\,\{\left| \varphi \left( x \right)-\bar{\varphi }\left( x \right) \right|\}.$$
The result then follows.
\end{proof}

\begin{theorem}\label{4.1*}
If $F(x,t,u)$ is nonincreasing with respect to $u$, then the nonlinear fractional diffusion equation \eqref{1.1*} subject to the initial and boundary conditions \eqref{1.2*}, \eqref{1.3*} admits at most one solution $u\in {{C}^{2}}(\bar G)\cap {{H}^{1}}((1,T])$
\end{theorem}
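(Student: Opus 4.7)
The plan is to mimic the uniqueness argument of Theorem \ref{4.1} for the Hadamard-type case, adapted to equation \eqref{1.1*} by invoking the maximum-principle machinery of Theorems \ref{t1*} and \ref{t2*}. Let $u_1$ and $u_2$ be two solutions of the problem \eqref{1.1*}--\eqref{1.3*} and set $v = u_1 - u_2$. Subtracting the two PDE's, $v$ satisfies
$$v_t(x,t) - \nu \Delta_x D_t^{1-\alpha} v(x,t) = F(x,t,u_1) - F(x,t,u_2), \quad (x,t)\in\Omega,$$
together with zero initial and boundary data. By the mean value theorem, $F(x,t,u_1) - F(x,t,u_2) = a(x,t)\, v(x,t)$, where $a(x,t) = \partial_u F(x,t, u^*(x,t))$ for some $u^*$ between $u_1$ and $u_2$; by the monotonicity hypothesis on $F$, we have $a(x,t) \leq 0$ throughout $\Omega$.

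I would then argue by contradiction: suppose $v \not\equiv 0$. Since $v$ vanishes on the parabolic boundary $\partial G \times [1,T] \cup \bar G \times \{1\}$, it attains either a strict positive maximum or a strict negative minimum at some interior point $(x_0,t_0) \in G\times(1,T]$. By symmetry (the positive-maximum case is handled by passing to $-v$, which satisfies an equation of the same form with the same sign hypothesis on $a$), it suffices to treat the negative-minimum case. At $(x_0, t_0)$ the product $a(x_0,t_0)\, v(x_0,t_0)$ is nonnegative, so the right-hand side of the equation for $v$ carries the same sign as the source $F \geq 0$ in the hypothesis of Theorem \ref{t1*}. Following that proof step by step, I would regularize via $\tilde v = v + \mu (\log t)^\alpha$ for small $\mu > 0$, introduce the auxiliary function $w = D_t^{1-\alpha} \tilde v$, and verify that $w$ satisfies a relation of the form $D_t^\alpha w = \nu \Delta_x w + G(x,t)$, where $G$ collects the reaction term $a v$ and the strictly positive perturbation $\mu \alpha (\log t)^{\alpha - 1}/t$. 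Applying Proposition \ref{l8}(ii) to $w$ at an interior negative minimum $(x_*,t_*)$, together with $\Delta_x w(x_*,t_*) \geq 0$, then yields $D_t^\alpha w(x_*,t_*) < 0$; the equation forces $D_t^\alpha w(x_*,t_*) \geq G(x_*,t_*)$, and a suitable choice of $\mu$ makes $G(x_*,t_*) > 0$, producing the desired contradiction.

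The main obstacle lies precisely in the last step: the zero-order reaction $a v$ has favorable sign only where $v$ itself is negative, and the interior negative minimum of $w = D_t^{1-\alpha}\tilde v$ need not coincide with the negative minimum of $v$. Consequently, one cannot invoke Theorem \ref{t1*} as a black box on $v$; instead, one must exploit the strict positivity of the perturbation $\mu \alpha (\log t)^{\alpha-1}/t$ to absorb any wrong-signed contribution of $a v$ at the minimum of $w$, or refine the regularization so as to couple the two sets of extrema. Once the contradiction is obtained for both extremum types, we conclude $v \equiv 0$ on $\overline{\Omega}$, i.e. $u_1 \equiv u_2$, and uniqueness follows.
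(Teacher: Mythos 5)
Your setup is exactly the route the paper intends: the paper gives no separate proof of Theorem \ref{4.1*}, stating only that it ``is proved similarly'' to Theorem \ref{4.1}; that is, one forms $v=u_1-u_2$, linearizes $F$ via the mean value theorem to obtain a reaction term $a(x,t)v$ with $a\le 0$, and runs the extremum machinery of Theorems \ref{t1*}--\ref{t2*}. So in spirit you are aligned with the source. The difficulty you flag, however, is genuine, and your proposal does not close it. In Theorem \ref{4.1} the contradiction is derived at an extremum of $v$ itself, where Proposition \ref{l7} controls $D_{*,t}^{\alpha}v$ and where $a\,v$ automatically has the favorable sign. For equation \eqref{1.1*} the contradiction must instead be derived at an extremum of the auxiliary function $w=D_t^{1-\alpha}\tilde v$, and at that point the sign of $a(x_*,t_*)\,v(x_*,t_*)$ is not controlled, since $v$ may well be positive where $w$ attains its negative minimum. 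Hence Theorem \ref{t1*} cannot be invoked as a black box with source $a v$, because that source is not of one sign on all of $\overline{\Omega}$.

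Your suggested repair --- choosing $\mu$ so that the perturbation $\mu\alpha(\log t)^{\alpha-1}/t$ dominates a wrong-signed contribution of $a v$ at $(x_*,t_*)$ --- cannot work as stated: the argument must conclude by letting $\mu\to 0$ (to pass from $u\ge -\mu(\log t)^{\alpha}$ to $u\ge 0$), so $\mu$ cannot be taken large; moreover the extremum point $(x_*,t_*)$ itself depends on $\mu$, so no uniform choice is available. Closing the proof requires an additional idea, e.g.\ a comparison function that absorbs the zero-order term, or an argument linking the sign of $v$ to the sign of $w$ at the relevant extremum. As written, your text is an accurate diagnosis of why the paper's ``proved similarly'' is not automatic for the Hadamard-derivative equation \eqref{1.1*}, but it is not yet a complete proof.
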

\begin{theorem}\label{4.2*}
If ${{u}_{1}}\left( x,t \right)$ and ${{u}_{2}}\left( x,t \right)$ are two solutions of the time-fractional diffusion equation \eqref{1.1*} that satisfy the same boundary condition \eqref{1.3*} and the initial conditions $${{u}_{1}}\left( x,1 \right)={g}_{1}(x)\,\,\, \textrm{and} \,\,\,{{u}_{2}}\left( x,1 \right)={g}_{2}(x), x\in\bar G$$ and if $F(x,t,u)$ is nonincreasing with respect to $u$, then it holds that
$${{\left\| {{u}_{1}}\left( x,t \right)-{{u}_{2}}\left( x,t \right) \right\|}_{C\left(\overline{\Omega }\right)}}\leq {{\left\| {{g}_{1}}\left( x \right)-{{g}_{2}}\left( x \right) \right\|}_{C(\bar G)}}.$$
\end{theorem}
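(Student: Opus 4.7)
The plan is to reduce Theorem \ref{4.2*} to the extremum principles of Theorems \ref{t1*}--\ref{t2*} by examining the difference $v(x,t) := u_1(x,t) - u_2(x,t)$. Subtracting the two instances of \eqref{1.1*} and applying the mean value theorem to $F$ in its third argument, I obtain
$$v_t(x,t) - \nu D_t^{1-\alpha}\Delta_x v(x,t) = \frac{\partial F}{\partial u}(x,t,u^*)\,v(x,t),\qquad (x,t)\in\Omega,$$
where $u^* = (1-\theta)u_1 + \theta u_2$ for some $\theta \in [0,1]$, with initial datum $v(x,1) = g_1(x)-g_2(x)$ and homogeneous lateral boundary condition. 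Set $\mathcal{M} := \|g_1-g_2\|_{C(\bar G)}$; the goal is to show $\|v\|_{C(\bar\Omega)} \leq \mathcal{M}$.

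Next I would argue by contradiction. Assuming $\|v\|_{C(\bar\Omega)} > \mathcal{M}$, and noting that $v \equiv 0$ on $\partial G \times [1,T]$ and $|v(\cdot,1)| \leq \mathcal{M}$ on $\bar G$, any violation must occur at an interior point, so $v$ attains either a positive maximum $\mathcal{M}_1 > \mathcal{M}$ or a negative minimum $\mathcal{M}_2 < -\mathcal{M}$ at some $(x_0,t_0) \in \Omega$. The nonincreasing hypothesis on $F$ yields $\frac{\partial F}{\partial u}(x,t,u^*) \leq 0$, hence the forcing $\frac{\partial F}{\partial u}(u^*)\,v$ is nonpositive at a positive maximum and nonnegative at a negative minimum, i.e.\ of the favourable sign for the associated extremum principle.

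From this point I would replay the auxiliary-function device of Theorem \ref{t1*}. Focus on the negative-minimum case, since the positive-maximum case reduces to this upon replacing $v$ by $-v$ (which satisfies an equation of the same form). Introduce the perturbation $\tilde v = v + \eta(\log t)^\alpha$ for small $\eta > 0$, set $w := D_t^{1-\alpha}\tilde v$, and use Properties \ref{p4}--\ref{p7} to derive the equation satisfied by $w$ together with its nonnegativity on the parabolic boundary up to an $O(\eta)$ error. Applying Proposition \ref{l8}(ii) to $w$ at its negative interior minimum $(x_*,t_*)$ yields $D^\alpha w(x_*,t_*) < 0$, while the spatial second-derivative test gives $\Delta_x w(x_*,t_*) \geq 0$ and the lower-order term contributes with the favourable sign. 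These contradict the equation $D^\alpha w = \nu\Delta_x w + \tilde F$ at $(x_*,t_*)$, and sending $\eta \to 0^+$ removes the perturbation to conclude $\|v\|_{C(\bar\Omega)} \leq \mathcal{M}$.

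The main obstacle will be ensuring that the sign of the nonlinear lower-order term $\frac{\partial F}{\partial u}(u^*)\,v$, which is favourable at an extremum of $v$, is transported correctly through the transformation $v \mapsto \tilde v \mapsto w = D_t^{1-\alpha}\tilde v$ so that it still has the correct sign at the extremum $(x_*,t_*)$ of $w$. Unlike in Theorem \ref{t1*}, where the forcing was an \emph{a priori} nonnegative function of $(x,t)$ alone, here the favourable sign is coupled to the unknown $v$, so one must verify that absorbing this term into the inhomogeneity $\tilde F$ remains compatible with the sign conditions required at the secondary extremum $(x_*,t_*)$. Once this bookkeeping is settled, the contradiction follows as in the linear case.
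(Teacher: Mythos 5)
Your reduction is exactly the one the paper intends: the paper's own ``proof'' of Theorem \ref{4.2*} is a single sentence deferring to Theorems \ref{4.1} and \ref{4.2}, which in turn lean on the linear extremum principles, so forming $v=u_1-u_2$, linearizing $F$ by the mean value theorem, and feeding the resulting equation into the machinery of Theorems \ref{t1*}--\ref{t2*} is precisely the intended route. Up to the observation that $\frac{\partial F}{\partial u}(u^*)\,v$ has the favourable sign at an extremum of $v$, your argument coincides with the paper's treatment of the non-starred case (Theorem \ref{4.2}).

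However, the obstacle you flag in your last paragraph is not mere bookkeeping --- it is a genuine gap, and your proposal does not close it. In the starred setting the contradiction is not derived at the extremum $(x_0,t_0)$ of $v$ (where the sign of $\frac{\partial F}{\partial u}(u^*)\,v$ is indeed controlled) but at the \emph{secondary} extremum $(x_*,t_*)$ of $w=D_t^{1-\alpha}\tilde v$. The last step of the Theorem \ref{t1*} argument requires the inhomogeneity $\tilde F=\frac{\partial F}{\partial u}(u^*)\,v+O(\eta)$ to be nonnegative \emph{at} $(x_*,t_*)$; since $\frac{\partial F}{\partial u}(u^*)\le 0$, this amounts to $v(x_*,t_*)\le 0$. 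But $(x_*,t_*)$ is an extremum of $w$, not of $v$, and nothing in the argument controls the sign of $v$ there. This is exactly where the linear case (forcing given by a fixed-sign function of $(x,t)$ alone, hence nonnegative at \emph{every} point, in particular at $(x_*,t_*)$) differs from the nonlinear one, so ``once this bookkeeping is settled'' defers the only nontrivial point of the proof. To be fair, the paper hides the same difficulty behind ``proved similarly''; but as a self-contained argument your proposal is incomplete at precisely this step, and completing it would require either a different comparison function or an argument locating the relevant extremum of $w$ at a point where $v$ has the right sign.
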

Theorems \ref{4.1*} and \ref{4.2*} are proved similarly as Theorems \ref{4.1} and \ref{4.2} in Section \ref{subsection2}.

\section{Maximum principle for an elliptic equation with fractional derivative in a multidimensional cube and its applications}
In this section, we consider an elliptic equation with the Hadamard derivative in a multidimensional cube
\begin{multline}\label{6.1} \Delta_x u(x)+\sum\limits_{j=1}^n a_j(x) \frac{\partial u}{\partial x_j}(x)+\sum\limits_{j=1}^n b_{j}(x) D_{x_{j}}^{\alpha}u(x)\\+c(x)u(x)=F(x),\,x\in \prod\limits_{j=1}^n (1, h_j)=\Omega,\end{multline}
where $a_j(x), b_j(x), c(x)$ and $F(x)$ are given functions.

\subsection{Weak and strong maximum principle}
\vskip.3cm
We start with a weak maximum principle that is formulated in the following theorem.
\begin{theorem}\label{t6.1} Let a function $u(x)\in C^2(\Omega)\cap C^1(\bar\Omega)$ satisfy the equation \eqref{6.1} and $b_j(x)<0,\,c(x)\leq 0,\,x\in\bar\Omega.$ If $F(x)\geq 0,$ then the inequality \begin{equation}\label{6.2}\max\limits_{x\in\bar\Omega} u(x)\leq\max\limits_{x\in\partial\Omega} \{u(x),0\}\end{equation} holds true, where $\partial\Omega$ is a boundary of $\Omega.$
\end{theorem}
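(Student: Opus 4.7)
The plan is a proof by contradiction in the classical spirit of weak maximum principles, with the key new ingredient being the one-variable Hadamard extremum inequality of Proposition \ref{l8} applied in each coordinate direction. Let $M = \max_{x \in \bar\Omega} u(x)$ and assume, for contradiction, that $M > \max_{x \in \partial\Omega}\{u(x), 0\}$. Since $u$ is continuous on the compact set $\bar\Omega$, this maximum is attained at some point $x_0 \in \bar\Omega$; the strict inequality forces $x_0 \in \Omega$ and $u(x_0) = M > 0$. Write $x_0 = (x_0^{(1)}, \ldots, x_0^{(n)})$ with $x_0^{(j)} \in (1, h_j)$ for every $j$, so in particular $\log x_0^{(j)} > 0$.

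At this interior maximum I collect sign information term by term. From ordinary multivariable calculus, $\frac{\partial u}{\partial x_j}(x_0) = 0$ for all $j$ and $\Delta_x u(x_0) \leq 0$. For the fractional terms, I fix the other coordinates and consider, for each $j$, the slice $\varphi_j(t) := u(x_0^{(1)}, \ldots, x_0^{(j-1)}, t, x_0^{(j+1)}, \ldots, x_0^{(n)})$ on $[1, h_j]$. The hypothesis $u \in C^1(\bar\Omega)$ yields $\varphi_j \in C^1([1, h_j])$, and $\varphi_j$ attains its maximum at the interior point $t = x_0^{(j)}$. Applying Proposition \ref{l8}(i) to $\varphi_j$ then gives
$$D_{x_j}^\alpha u(x_0) \;=\; D^\alpha \varphi_j\bigl(x_0^{(j)}\bigr) \;\geq\; \frac{\bigl(\log x_0^{(j)}\bigr)^{-\alpha}}{\Gamma(1-\alpha)}\, u(x_0) \;>\; 0,$$
where the strict positivity uses $x_0^{(j)} > 1$ and $u(x_0) > 0$. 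Finally, $c(x_0) u(x_0) \leq 0$ because $c \leq 0$ and $u(x_0) > 0$.

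Substituting into \eqref{6.1} at $x = x_0$, the Laplacian term is nonpositive, the first-order drift term vanishes, the zero-order term is nonpositive, and each summand $b_j(x_0) D_{x_j}^\alpha u(x_0)$ is \emph{strictly} negative since $b_j(x_0) < 0$ and $D_{x_j}^\alpha u(x_0) > 0$. Hence the left-hand side of \eqref{6.1} is strictly negative at $x_0$, contradicting $F(x_0) \geq 0$. This forces $M \leq \max_{x\in\partial\Omega}\{u(x), 0\}$, which is \eqref{6.2}. The main technical obstacle is the justification for using Proposition \ref{l8}(i) on the slice $\varphi_j$: one must verify that $I^{1-\alpha}\varphi_j \in C^1([1, h_j])$ (which follows from $\varphi_j \in C^1$ by a standard estimate) and that the mixed notation $D_{x_j}^\alpha u(x_0)$ indeed coincides with $D^\alpha \varphi_j(x_0^{(j)})$, i.e.\ that freezing the other variables commutes with the one-dimensional Hadamard derivative. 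A conceptual point worth stressing is that the strict sign $b_j < 0$ is exactly what replaces the classical auxiliary-function perturbation $\varepsilon e^{\lambda x_1}$ of a Hopf-type argument: if only $b_j \leq 0$ were assumed, the Hadamard contribution would be merely nonpositive and a perturbation trick would need to be reintroduced in order to produce a strict inequality.
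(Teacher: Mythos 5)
Your proof is correct and follows essentially the same route as the paper: contradiction via an interior positive maximum, the classical sign conditions on $\nabla u$, $\Delta u$ and $cu$, and Proposition \ref{l8}(i) applied coordinate-wise to make each term $b_j(x_0)D_{x_j}^{\alpha}u(x_0)$ strictly negative, contradicting $F\geq 0$. Your explicit treatment of the one-dimensional slices $\varphi_j$ and the remark on why the strict sign $b_j<0$ is needed are welcome clarifications of steps the paper leaves implicit, but they do not change the argument.
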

\begin{proof} To prove the theorem, let us assume that the inequality \eqref{6.2} does not hold true under the conditions that are formulated in Theorem \ref{t6.1}, i.e. that the function $u(x)$ attains its positive maximum, say $M > 0$ at a point $x^*=(x_1^*,...,x_n^*) \in \Omega.$

Because $$c(x^*)\leq 0, \frac{\partial u}{\partial x_j}(x^*)=0\,\,\, \textrm{and} \,\,\,\frac{\partial^2 u}{\partial x^2_j}(x^*)\leq 0,$$ we first get the inequality
$$\Delta_x u(x^*)+c(x^*)u(x^*)\leq 0.$$
Then, it follows from Proposition \ref{l8} that
$$D_{x_j}^{\alpha }u \left( {{x}^*} \right)\ge \frac{1}{\Gamma(1-\alpha)}\left(\log x_j^*\right)^{-\alpha} u\left( x^* \right)>0.$$
As $b_{j}(x^*)<0,$ then we obtain
$$\sum\limits_{j=1}^n b_{j}(x^*) D_{x_{j}}^{\alpha}u(x^*)<0.$$
The last two inequalities lead to the inequality
$$\Delta_x u(x^*)+\sum\limits_{j=1}^n b_{j}(x^*) D_{x_{j}}^{\alpha}u(x^*)+c(x^*)u(x^*)<0$$ that contradicts the following one: \begin{align*}\Delta_x u(x)&+\sum\limits_{j=1}^n a_j(x) \frac{\partial u}{\partial x_j}(x)\\& +\sum\limits_{j=1}^n b_{j}(x) D_{x_{j}}^{\alpha}u(x)+c(x)u(x)\geq 0,\,x\in\Omega\end{align*} of Theorem \ref{t6.1}. A contradiction. The theorem is proved.
\end{proof}
The following theorem is proved similarly.
\begin{theorem}\label{t6.2} Let a function $u(x)\in C^2(\Omega)\cap C^1(\bar\Omega)$ satisfy the equation \eqref{6.1} and $b_j(x)>0,\,c(x)\leq 0,\,x\in\bar\Omega.$ If $F(x)\leq 0,$ then the inequality \begin{equation}\label{6.3}\min\limits_{x\in\bar\Omega} u(x)\geq\min\limits_{x\in\partial\Omega} \{u(x),0\}\end{equation} holds true.
\end{theorem}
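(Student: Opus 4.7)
The plan is to mirror the contradiction argument that proves Theorem~\ref{t6.1}, now working with a negative interior minimum rather than a positive interior maximum. Suppose, for contradiction, that \eqref{6.3} fails; then $u$ attains a strictly negative minimum $-M<0$ at some interior point $x^{*}=(x_{1}^{*},\ldots,x_{n}^{*})\in\Omega$. At $x^{*}$ standard calculus gives $\partial u/\partial x_{j}(x^{*})=0$ and $\partial^{2}u/\partial x_{j}^{2}(x^{*})\ge 0$ for each $j$, so $\Delta_{x}u(x^{*})\ge 0$ and the advection sum $\sum_{j}a_{j}(x^{*})\partial u/\partial x_{j}(x^{*})$ vanishes; combined with $c(x^{*})\le 0$ and $u(x^{*})<0$ this yields $\Delta_{x}u(x^{*})+c(x^{*})u(x^{*})\ge 0$.

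Next, I would apply Proposition~\ref{l8}(ii) one coordinate at a time: freezing every variable except $x_{j}$, the univariate slice $x_{j}\mapsto u(x_{1}^{*},\ldots,x_{j},\ldots,x_{n}^{*})$ attains its minimum at $x_{j}^{*}$, so the proposition delivers
$$
D_{x_{j}}^{\alpha}u(x^{*})\le\frac{(\log x_{j}^{*})^{-\alpha}}{\Gamma(1-\alpha)}\,u(x^{*})<0
$$
for every $j$. Multiplying by $b_{j}(x^{*})$ with the prescribed sign and summing produces a strict sign for $\sum_{j}b_{j}(x^{*})D_{x_{j}}^{\alpha}u(x^{*})$; combining this with the earlier inequality forces the left-hand side of \eqref{6.1} evaluated at $x^{*}$ to be strictly of opposite sign to $F(x^{*})\le 0$, which is the desired contradiction.

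As an alternative route, one can apply Theorem~\ref{t6.1} directly to $v(x)=-u(x)$: since $v$ solves an equation of the same form as \eqref{6.1} with the same coefficients $a_{j},b_{j},c$ but with forcing $-F(x)\ge 0$, the upper bound for $\max v$ furnished by Theorem~\ref{t6.1} translates, via the identity $-\max_{\partial\Omega}\{-u,0\}=\min_{\partial\Omega}\{u,0\}$, into the lower bound on $\min u$ claimed in \eqref{6.3}. The main obstacle in either route is simply sign bookkeeping: the Laplacian term and the reaction term $cu$ contribute only non-strict inequalities, so one must ensure that the fractional sum supplies a genuinely strict inequality in the correct direction — exactly the role it played in the proof of Theorem~\ref{t6.1}.
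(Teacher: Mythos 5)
Your overall strategy (mirror the interior-extremum contradiction of Theorem~\ref{t6.1}, or equivalently apply that theorem to $-u$) is exactly what the paper intends --- it gives no proof of Theorem~\ref{t6.2} beyond the remark that it ``is proved similarly.'' But the sign bookkeeping that you yourself single out as ``the main obstacle'' does not close under the stated hypothesis $b_j(x)>0$, and you never actually carry it out. At a negative interior minimum $x^*$, Proposition~\ref{l8}(ii) gives
\[
D_{x_j}^{\alpha}u(x^*)\le \frac{(\log x_j^*)^{-\alpha}}{\Gamma(1-\alpha)}\,u(x^*)<0 ,
\]
so with $b_j(x^*)>0$ the fractional sum $\sum_{j} b_j(x^*)D_{x_j}^{\alpha}u(x^*)$ is strictly \emph{negative}. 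The left-hand side of \eqref{6.1} at $x^*$ is then the sum of $\Delta_x u(x^*)\ge 0$, a vanishing advection term, $c(x^*)u(x^*)\ge 0$, and a strictly negative term; its sign is indeterminate, so there is no contradiction with $F(x^*)\le 0$. Your first route therefore stalls precisely at the step you waved through. Your second route fails for the same reason: $v=-u$ satisfies an equation of the form \eqref{6.1} with the same coefficients $b_j>0$ and forcing $-F\ge 0$, but Theorem~\ref{t6.1} requires $b_j<0$, so it cannot be applied to $v$.

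The contradiction does go through, by either of your routes, if the hypothesis is $b_j(x)<0$ (the same sign as in Theorem~\ref{t6.1}): then $b_j(x^*)D_{x_j}^{\alpha}u(x^*)>0$ for each $j$, the left-hand side of \eqref{6.1} at $x^*$ is strictly positive, and this contradicts $F(x^*)\le 0$; the reduction to Theorem~\ref{t6.1} via $-u$ then also becomes legitimate. In other words, the statement as printed appears to carry a sign error in the condition on $b_j$, and the ``similar proof'' the paper alludes to only exists under $b_j<0$. A complete write-up should either prove the theorem under $b_j<0$ and flag the discrepancy, or supply a genuinely different argument for $b_j>0$ --- which neither you nor the paper provides.
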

\begin{remark}\label{rem1} In the proof of the weak maximum principle, we have in fact deduced a statement that is stronger than the inequality \eqref{6.2}, namely, we proved that a function u that fulfills the conditions of Theorem \ref{t6.1} cannot attain its positive maximum at a point $x^* \in \Omega.$\end{remark}
The statement of Remark \ref{rem1} is now employed to derive a strong maximum principle for the elliptic equation \eqref{6.1}.

\begin{theorem}\label{t6.3} Let a function $u(x)\in C^2(\Omega)\cap C^1(\bar\Omega)$ satisfy the homogeneous elliptic equation \eqref{6.1} and $c(x) \leq 0, x\in\Omega.$
If the function u attains its maximum and its minimum at some points that belong to $\Omega,$ then it is a constant, more precisely $$u(x) =0,\,x\in \Omega.$$\end{theorem}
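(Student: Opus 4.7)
The plan is a direct two-sided application of the strengthened weak extremum principle encoded in Remark~\ref{rem1}, together with its obvious minimum analogue obtained from the proof of Theorem~\ref{t6.2}. Because equation~\eqref{6.1} is taken to be homogeneous, the right-hand side $F \equiv 0$ simultaneously satisfies $F \geq 0$ and $F \leq 0$, so both Theorem~\ref{t6.1} and Theorem~\ref{t6.2} (and their interior-exclusion strengthenings) are in force for the same function $u$.

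First, I would invoke Remark~\ref{rem1}: under the hypotheses of Theorem~\ref{t6.1} with $F\equiv 0$, the function $u$ cannot attain a strictly positive maximum at any point of $\Omega$. Since by hypothesis $u$ attains its maximum at some interior point, this maximum value is forced to be nonpositive, giving $\max_{\bar\Omega} u \leq 0$. Second, by the symmetric argument, either applied to $-u$ or obtained by running the proof of Theorem~\ref{t6.2} directly and reading off the minimum analogue of Remark~\ref{rem1}, the function $u$ cannot attain a strictly negative minimum at any interior point. Since $u$ attains its minimum in $\Omega$, this gives $\min_{\bar\Omega} u \geq 0$. Combining the two bounds $\min u \geq 0 \geq \max u$ forces $u(x) \equiv 0$ on $\Omega$, as claimed.

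The main point demanding care is the contradiction step inherited from Proposition~\ref{l8}: at a putative interior positive maximum $x^{*}$, one needs the strict inequality $D_{x_j}^{\alpha} u(x^{*}) > 0$ together with a definite sign of $b_j(x^{*})$ in order to force $\sum_{j} b_j(x^{*}) D_{x_j}^{\alpha} u(x^{*})$ to contradict the homogeneous equation, and similarly at an interior negative minimum. Tracking these signs correctly so that both interior-exclusion conclusions are available simultaneously under the stated hypotheses is the delicate step; once that is settled, the squeeze $0 \leq \min u \leq \max u \leq 0$ closes the argument immediately.
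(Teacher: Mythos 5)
Your overall skeleton is the paper's: use the interior-exclusion statement of Remark~\ref{rem1} twice, once for $u$ and once for $-u$, and squeeze $\min u \geq 0 \geq \max u$. But there is one genuine confusion you flag and leave unresolved, and one of the two routes you offer for the second half does not work. You assert that, because $F\equiv 0$ satisfies both $F\geq 0$ and $F\leq 0$, ``both Theorem~\ref{t6.1} and Theorem~\ref{t6.2} \dots are in force for the same function $u$.'' They cannot be: Theorem~\ref{t6.1} assumes $b_j(x)<0$ while Theorem~\ref{t6.2} assumes $b_j(x)>0$, so the two hypotheses are mutually exclusive (unless $b_j\equiv 0$). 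Hence ``running the proof of Theorem~\ref{t6.2} directly'' to exclude an interior negative minimum of $u$ is not available under the same assumptions that gave you the maximum bound.

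The correct resolution --- and the one the paper uses --- is your other alternative: apply Remark~\ref{rem1} to $-u$. Since the equation is linear and homogeneous, $-u$ satisfies the \emph{same} equation~\eqref{6.1} with the \emph{same} coefficients $b_j$ and with $F\equiv 0\geq 0$, so the single sign condition $b_j<0$ (inherited from Theorem~\ref{t6.1}) suffices for both applications of Remark~\ref{rem1}; a minimum point of $u$ in $\Omega$ is a maximum point of $-u$ in $\Omega$, whence $-u\leq 0$ as well, and $u\equiv 0$. You should commit to this route and drop the appeal to Theorem~\ref{t6.2}. (You could also note, as a criticism of the statement itself, that Theorem~\ref{t6.3} as written omits any hypothesis on $b_j$, so the sign condition $b_j<0$ from Theorem~\ref{t6.1} is being used implicitly; your instinct that ``tracking these signs'' is the delicate point is exactly right, but the proof is only complete once you actually settle it in the way just described.)
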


\begin{proof} The proof of Theorem is a very simple one. Indeed, according to Remark \ref{rem1}, $$u(x) \leq 0, x\in\bar\Omega$$ for a function $u(x)$ that attains its maximum at a point $x^*\in\Omega.$
Now let us consider the function $-u(x)$ that satisfies the homogeneous equation \eqref{6.1} and possesses a maximum at the minimum point of $u(x)$ and thus at a point that belongs to $\Omega.$ The
maximum of $-u(x)$ cannot be positive according to Remark \ref{rem1} and we get the inequality $$-u(x) \leq 0, x\in\Omega.$$ Putting the two last inequalities together, we get the statement of Theorem \ref{t6.3}.
\end{proof}
\subsection{Applications of the maximum principles}
In this section, we start with the boundary-value problem
\begin{equation}\label{6.4}u(x)=\varphi(x),\,x\in\partial\Omega,\end{equation} for the elliptic equation \eqref{6.1}.
The following result is a direct consequence of the weak maximum principle

\begin{theorem} Let $F(x),a_j(x), b_j(x), \varphi(x)$ and $c(x)\leq 0$ be smooth functions. Then the boundary-value problem \eqref{6.4} for the equation \eqref{6.1} possesses at most one solution $u(x)$ in the functional space $C^2(\Omega)\cap C^1(\bar\Omega).$
\end{theorem}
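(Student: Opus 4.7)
The plan is a straightforward subtraction argument combined with the strong maximum principle of Theorem \ref{t6.3}. Suppose $u_1, u_2 \in C^2(\Omega) \cap C^1(\bar\Omega)$ are two solutions of \eqref{6.1}--\eqref{6.4} with the same right-hand side $F$ and the same boundary datum $\varphi$. Set $v := u_1 - u_2$. By linearity of the operator in \eqref{6.1}, the function $v$ satisfies the homogeneous elliptic equation (that is, \eqref{6.1} with $F$ replaced by $0$), while \eqref{6.4} gives $v = 0$ on $\partial\Omega$. The problem reduces to showing $v \equiv 0$ on $\bar\Omega$.

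Since $v$ is continuous on the compact set $\bar\Omega$, it attains its maximum $M$ and its minimum $m$. Because $v$ vanishes identically on $\partial\Omega$, whenever an extremum of $v$ is attained only on the boundary, its value is zero. I split into cases according to where these extrema are attained. If both $M$ and $m$ are attained at points of $\Omega$, Theorem \ref{t6.3} applies immediately (its standing hypothesis $c(x) \leq 0$ is in force) and yields $v \equiv 0$.

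The remaining configurations are handled by the same maximum principle together with Remark \ref{rem1}. If $M$ is attained only on $\partial\Omega$ then $M = 0$, hence $v \leq 0$ on $\bar\Omega$; if in addition $m$ is attained at an interior point $x_* \in \Omega$, then $-v$ possesses a strictly positive interior maximum, which Remark \ref{rem1} rules out. Consequently $m \geq 0$, and combined with $v \leq 0$ this gives $v \equiv 0$. The symmetric configuration (minimum attained on $\partial\Omega$, maximum in the interior) is dispatched identically by applying Remark \ref{rem1} to $v$ itself.

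The main obstacle is just the bookkeeping across the mixed cases, where one extremum lies on $\partial\Omega$ and the other in $\Omega$; each such case must be converted to the exclusion of a positive interior maximum, either for $v$ or for $-v$. No new estimate for the Hadamard derivative is required — the whole argument is built on the strong maximum principle of Theorem \ref{t6.3} and the sharpened weak form recorded in Remark \ref{rem1}.
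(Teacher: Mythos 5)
Your argument is correct and is essentially the route the paper intends: the paper gives no written proof, merely calling the theorem a direct consequence of the weak maximum principle, and your reduction to $v=u_1-u_2$ followed by the interior-extremum exclusion of Remark \ref{rem1} and Theorem \ref{t6.3} is the same mechanism. Two minor observations: the case analysis is avoidable, since applying Remark \ref{rem1} (equivalently, Theorem \ref{t6.1}) to both $v$ and $-v$ --- each solving the homogeneous equation with zero boundary values --- yields $v\le 0$ and $-v\le 0$ in one stroke; and, exactly like the paper's own statement, your proof silently inherits the sign hypothesis $b_j(x)<0$ required by Theorem \ref{t6.1} and Remark \ref{rem1}, which the uniqueness theorem as stated does not impose.
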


The following two theorems follow directly from Theorem \ref{t6.1} and Theorem \ref{t6.2}.
\begin{theorem}\label{t6.4} Let $u(x)\in C^2(\Omega)\cap C^1(\bar\Omega)$ fulfill the equation \eqref{6.1} and $c(x) \leq 0, x\in\Omega.$ If $u(x)$ satisfies the boundary condition \eqref{6.4} and $\varphi(x) \geq 0, x\in \partial\Omega,$ then $$u(x) \geq 0, x\in\bar\Omega.$$
\end{theorem}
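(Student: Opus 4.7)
The plan is to invoke the weak minimum principle of Theorem \ref{t6.2} and exploit the fact that the boundary data are nonnegative. Under the hypotheses of that theorem (which, as the sentence preceding the statement indicates, are understood to hold here — namely $F(x)\le 0$ and $b_j(x)>0$ in addition to $c(x)\le 0$), inequality \eqref{6.3} immediately yields
$$\min_{x\in\bar\Omega}u(x)\ \ge\ \min_{x\in\partial\Omega}\{u(x),0\}.$$

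First I would substitute the boundary condition \eqref{6.4} into the right-hand side: on $\partial\Omega$ we have $u(x)=\varphi(x)\ge 0$ by assumption, so every value appearing in that minimum is nonnegative. Consequently $\min_{x\in\partial\Omega}\{u(x),0\}=\min_{x\in\partial\Omega}\{\varphi(x),0\}=0$. Combining with the previous display gives $u(x)\ge \min_{y\in\bar\Omega}u(y)\ge 0$ for every $x\in\bar\Omega$, which is exactly the conclusion of Theorem \ref{t6.4}.

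The derivation is essentially a one-liner once the minimum principle is available, so no step is a serious obstacle. The only conceptual point worth isolating is the recognition that the hypothesis $\varphi\ge 0$ on $\partial\Omega$ is precisely what is required to collapse the $\min\{\,\cdot\,,0\}$ expression on the right of \eqref{6.3} to zero; without such a sign condition on the boundary data, the minimum principle would only deliver a (possibly negative) lower bound on $u$. One could alternatively argue by contradiction in the spirit of Theorem \ref{t6.1}: if $u$ attained a negative minimum at some interior $x^*\in\Omega$, then Proposition \ref{l8}(ii) together with $\nabla u(x^*)=0$, $\Delta_x u(x^*)\ge 0$, $c(x^*)\le 0$, and $b_j(x^*)>0$ would force the left-hand side of \eqref{6.1} to be positive at $x^*$, contradicting $F(x^*)\le 0$; but invoking Theorem \ref{t6.2} directly is cleaner and is the route the authors clearly have in mind.
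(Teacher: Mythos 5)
Your proof is correct and is exactly the argument the paper intends: the authors give no separate proof of Theorem \ref{t6.4}, stating only that it ``follows directly from Theorem \ref{t6.1} and Theorem \ref{t6.2},'' and your one-line deduction --- apply \eqref{6.3}, note that $\varphi\ge 0$ on $\partial\Omega$ forces $\min_{\partial\Omega}\{u(x),0\}=0$, hence $u\ge 0$ on $\bar\Omega$ --- is precisely that route. You were also right to flag that the sign hypotheses $F(x)\le 0$ and $b_j(x)>0$ from Theorem \ref{t6.2} are omitted from the statement of Theorem \ref{t6.4} but must be assumed for the argument (or the alternative direct contradiction via Proposition \ref{l8}) to go through.
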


\begin{theorem}\label{t6.5} Let $u(x)\in C^2(\Omega)\cap C^1(\bar\Omega)$ is the solution of elliptic equation \eqref{6.1} and $c(x) \leq 0, x\in\Omega.$ If $u(x)$ satisfies the boundary condition \eqref{6.4} and $\varphi(x) \leq 0, x\in \partial\Omega,$ then $$u(x) \leq 0, x\in\bar\Omega.$$
\end{theorem}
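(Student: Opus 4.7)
The plan is to derive Theorem \ref{t6.5} as an immediate corollary of the weak maximum principle, Theorem \ref{t6.1}, in the same spirit as the author's remark that Theorems \ref{t6.4} and \ref{t6.5} ``follow directly from Theorem \ref{t6.1} and Theorem \ref{t6.2}.'' The natural route is: apply Theorem \ref{t6.1} to $u$ itself (rather than to $-u$, which would scramble the sign hypothesis on the coefficients $b_j$), read off the inequality
$$\max_{x\in\bar\Omega} u(x)\ \le\ \max_{x\in\partial\Omega}\{u(x),0\},$$
and then use the boundary data to collapse the right-hand side to $0$.

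First I would verify that $u$ meets the structural hypotheses of Theorem \ref{t6.1}: $u\in C^2(\Omega)\cap C^1(\bar\Omega)$ is given, $u$ satisfies \eqref{6.1} by assumption, and $c(x)\le 0$ is stated explicitly; the coefficient and source hypotheses $b_j(x)<0$ and $F(x)\ge 0$ carry over from Theorem \ref{t6.1} just as they implicitly did for Theorem \ref{t6.4}. With these in place, Theorem \ref{t6.1} yields directly
$$\max_{x\in\bar\Omega} u(x)\ \le\ \max_{x\in\partial\Omega}\{u(x),0\}.$$
Second, I would substitute the boundary condition \eqref{6.4}: on $\partial\Omega$ we have $u(x)=\varphi(x)\le 0$, so $\max_{x\in\partial\Omega}\{u(x),0\}=0$. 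Combining these two observations gives $\max_{x\in\bar\Omega} u(x)\le 0$, i.e. $u(x)\le 0$ for all $x\in\bar\Omega$, which is the claim.

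There is no real obstacle in the argument beyond checking the sign bookkeeping. The one mildly subtle point is that one might be tempted to apply Theorem \ref{t6.2} to $-u$: since $-u$ satisfies an equation of the same form with $F$ replaced by $-F$ and the same coefficients $a_j,b_j,c$, flipping the unknown does not flip the sign of $b_j$, so the hypothesis $b_j>0$ required by Theorem \ref{t6.2} would fail in general. Using Theorem \ref{t6.1} applied to $u$ directly sidesteps this issue and gives the cleanest derivation.
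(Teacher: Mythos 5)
Your proposal is correct and is exactly the argument the paper intends: the paper offers no written proof of Theorem \ref{t6.5} beyond the remark that it ``follows directly'' from Theorems \ref{t6.1} and \ref{t6.2}, and your derivation --- apply Theorem \ref{t6.1} to $u$ itself, then use $u=\varphi\le 0$ on $\partial\Omega$ to conclude $\max_{\bar\Omega}u\le\max_{\partial\Omega}\{u,0\}=0$ --- is that direct consequence. You also rightly observe that the sign hypotheses $b_j(x)<0$ and $F(x)\ge 0$ of Theorem \ref{t6.1} are needed but left unstated in Theorem \ref{t6.5}, and that applying Theorem \ref{t6.2} to $-u$ would fail because negating the unknown does not change the sign of $b_j$; these are defects of the paper's statement, not of your proof.
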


We now consider a non-linear elliptic equation in the form
\begin{multline}\label{6.5} \Delta_x u(x)+\sum\limits_{j=1}^n a_j(x) \frac{\partial u}{\partial x_j}(x)+\sum\limits_{j=1}^n b_{j}(x) D_{x_{j}}^{\alpha}u(x)\\+c(x)u(x)=F(u,x),\,x\in \prod\limits_{j=1}^n (1, h_j)=\Omega.\end{multline}
Under some suitable conditions on the non-linear part $F(u,x),$ the maximum principle for the elliptic equation \eqref{6.5} leads to an uniqueness result for the boundary-value problem \eqref{6.4} for the equation \eqref{6.5}.

\begin{theorem}\label{t6.6} Let $F(u,x)$ be a smooth and non-increasing function with respect to the variable $u.$ Then the boundary-value
problem \eqref{6.5}, \eqref{6.4} possesses at most one solution $u(x)\in C^2(\Omega)\cap C^1(\bar\Omega).$
\end{theorem}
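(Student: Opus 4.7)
The plan is to mirror the uniqueness argument used for Theorem~\ref{4.1}. Suppose $u_1, u_2 \in C^2(\Omega) \cap C^1(\bar{\Omega})$ are two solutions of \eqref{6.5}, \eqref{6.4}, and set $v(x) = u_1(x) - u_2(x)$. Since both solutions share the boundary data $\varphi$, $v$ vanishes on $\partial \Omega$. Subtracting the two equations and applying the mean value theorem to the nonlinear term produces
\[
\Delta v + \sum_{j=1}^n a_j(x)\, \frac{\partial v}{\partial x_j} + \sum_{j=1}^n b_j(x)\, D^{\alpha}_{x_j} v + c(x) v \;=\; \frac{\partial F}{\partial u}(u^*(x), x)\, v(x),
\]
where $u^*(x) = \mu(x)\, u_1(x) + (1-\mu(x))\, u_2(x)$ for some $\mu(x) \in [0,1]$. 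The non-increasing monotonicity hypothesis gives $\partial_u F(u^*, x) \le 0$ on $\Omega$.

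Next, assume for contradiction that $v \not\equiv 0$. Since $v|_{\partial\Omega} \equiv 0$, the function $v$ attains either a positive maximum or a negative minimum at some interior point $x^* \in \Omega$. At a positive maximum one has $\Delta v(x^*) \le 0$ and $\partial_{x_j} v(x^*) = 0$ by elementary calculus, while Proposition~\ref{l8}(i) supplies the strict inequality
\[
D^{\alpha}_{x_j} v(x^*) \;\ge\; \frac{(\log x_j^*)^{-\alpha}}{\Gamma(1-\alpha)}\, v(x^*) \;>\; 0.
\]
Under the sign hypothesis $b_j(x) < 0$ used in Theorem~\ref{t6.1}, each term on the left-hand side of the equation at $x^*$ is nonpositive, and the $b_j D^{\alpha}_{x_j} v$ contribution is strictly negative. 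On the right-hand side, the monotonicity of $F$ makes the effective source $-\partial_u F(u^*, x^*)\,v(x^*) \ge 0$, so the equation for $v$ can be recast in the form of \eqref{6.1} with a nonnegative source and zero boundary data; this reproduces exactly the sign contradiction encountered in the proof of Theorem~\ref{t6.1}. The case of a negative interior minimum is treated dually, invoking Proposition~\ref{l8}(ii) together with Theorem~\ref{t6.2}.

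The main obstacle I anticipate is the sign bookkeeping at the extremum: one must carefully orient the term $-\partial_u F(u^*)\,v$ so that it plays the role of a nonnegative (respectively nonpositive) source at a positive max (respectively negative min), which is exactly what the non-increasingness of $F$ in $u$ guarantees. Once each case is closed by contradiction via Theorems~\ref{t6.1} and~\ref{t6.2}, one concludes $v \equiv 0$ on $\bar{\Omega}$, and therefore $u_1 \equiv u_2$, which is the claimed uniqueness.
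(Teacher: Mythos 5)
Your overall strategy --- difference of solutions, mean value theorem, contradiction at an interior extremum via Proposition~\ref{l8} and the linear maximum principle --- is exactly the route the paper takes. But the decisive step does not close, and the place where it fails is precisely the ``sign bookkeeping'' you flagged. Write the equation for $v=u_1-u_2$ as
\[
\Delta v+\sum_{j=1}^n a_j(x)\,\frac{\partial v}{\partial x_j}+\sum_{j=1}^n b_j(x)\,D^{\alpha}_{x_j}v+c(x)\,v=\frac{\partial F}{\partial u}(u^*)\,v .
\]
At a positive interior maximum $x^*$ of $v$ you correctly get $\Delta v(x^*)\le 0$, $\partial_{x_j}v(x^*)=0$, $c(x^*)v(x^*)\le 0$ and, with $b_j<0$, $\sum b_jD^{\alpha}_{x_j}v(x^*)<0$, so the left-hand side is strictly negative. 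For a contradiction you need the right-hand side to be nonnegative there. But the actual source, when the equation is put in the form of \eqref{6.1}, is $+\partial_uF(u^*)v$, and since $F$ is non-increasing ($\partial_uF\le 0$) while $v(x^*)>0$, this source is $\le 0$: both sides are nonpositive and no contradiction arises. The quantity $-\partial_uF(u^*)\,v\ge 0$ that you invoke is the \emph{negative} of the source, not the source; equivalently, moving the term to the left produces the zeroth-order coefficient $c-\partial_uF(u^*)\ge c$, which need not be $\le 0$, so Theorems~\ref{t6.1} and~\ref{t6.4} do not apply. The monotonicity that makes this argument work for an equation written as $\Delta u+\dots=F(u,x)$, with the nonlinearity on the opposite side from $\Delta u$, is $F$ \emph{non-decreasing} in $u$; ``non-increasing'' is the correct hypothesis for the parabolic equation \eqref{1.1}, where $F$ sits on the same side as $\Delta_x u$, and it does not transfer here. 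Indeed, taking $a_j\equiv b_j\equiv c\equiv 0$ and $F(u,x)=-\lambda u$ with $\lambda$ a Dirichlet eigenvalue of $-\Delta$ on the box gives a smooth, non-increasing $F$ for which uniqueness fails, so this is not a repairable presentation issue. (For what it is worth, the paper's own proof makes the identical sign slip, so you have reproduced its argument faithfully --- gap included.)

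Two secondary points. First, your treatment of the negative-minimum case via Theorem~\ref{t6.2} requires $b_j>0$, which is incompatible with the $b_j<0$ you assumed for the positive-maximum case; the standard fix (and the paper's) is to run the positive-maximum argument once for $v$ and once for $-v$, which satisfies an equation of the same form. Second, Theorem~\ref{t6.6} as stated carries no sign hypotheses on $b_j$ or $c$ at all, so any honest proof must import $b_j<0$ (or $b_j>0$) and $c\le 0$ from Theorems~\ref{t6.1}--\ref{t6.2}, as you implicitly did.
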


\begin{proof} Again we employ a proof by contradiction and first suppose that $u_1(x)$ and $u_2(x)$ are two solutions of boundary-value
problem \eqref{6.5}, \eqref{6.4} that belong to the functional space $C^2(\Omega)\cap C^1(\bar\Omega).$  Then the auxiliary function $u(x) = u_1(x)-u_2(x)$ satisfies the equation
\begin{multline*} \Delta_x u(x)+\sum\limits_{j=1}^n a_j(x) \frac{\partial u}{\partial x_j}(x)+\sum\limits_{j=1}^n b_{j}(x) D_{x_{j}}^{\alpha}u(x)\\+c(x)u(x)=F(u_1,x)-F(u_2,x),\,x\in \Omega\end{multline*} and the homogeneous boundary condition \eqref{6.4}, i.e.,
$$u(x)=0,\,x\in\partial\Omega.$$ Applying the mean value theorem to the smooth function $F(u,x)$ yields the equation
\begin{multline*} \Delta_x u(x)+\sum\limits_{j=1}^n a_j(x) \frac{\partial u}{\partial x_j}(x)+\sum\limits_{j=1}^n b_{j}(x) D_{x_{j}}^{\alpha}u(x)\\+c(x)u(x)=\frac{\partial F}{\partial u}(u^*)u(x),\,x\in \Omega,\end{multline*} where $u^*(x)=(1-\mu)u_1(x)+\mu u_2(x),$ for some $\mu\in[0,1]$ that can be rewritten in the form
\begin{multline*} \Delta_x u(x)+\sum\limits_{j=1}^n a_j(x) \frac{\partial u}{\partial x_j}(x)+\sum\limits_{j=1}^n b_{j}(x) D_{x_{j}}^{\alpha}u(x)\\+c(x)u(x)-\frac{\partial F}{\partial u}(u^*)u(x)=0,\,x\in \Omega.\end{multline*}
Because $F(u,x)$ is a non-increasing function with respect to the variable $u,$ we get the inequality $$\frac{\partial F}{\partial u}(u^*)\leq 0,\,x\in\Omega.$$ Now we are in a position to apply Theorem \ref{t6.4} that produces the inequality $$u(x) \leq 0,\, x\in\bar\Omega.$$
The reasoning we employed for the function $u(x)$ is also valid for the auxiliary function $-u(x) = u_2(x) - u_1(x)$ that leads to the inequality $$-u(x) \leq 0,\, x\in\bar\Omega.$$ Combining the last two inequalities, we arrive at the formula $$u(x) =0,\, x\in\bar\Omega$$ that means that any two solutions of the boundary-value problem \eqref{6.5}, \eqref{6.4} coincide and thus the statement of the theorem is proved.
\end{proof}

\section*{Acknowledgements} The second author was financially supported by a grant No.AP05131756 from the Ministry of Science and Education of the Republic of Kazakhstan.

\end{document}